\documentclass{article}
\usepackage{graphicx} % Required for inserting images

% Language setting
% Replace `english' with e.g. `spanish' to change the document language
\usepackage[english]{babel}

% Set page size and margins
% Replace `letterpaper' with `a4paper' for UK/EU standard size
\usepackage[letterpaper,top=2cm,bottom=2cm,left=3cm,right=3cm,marginparwidth=1.75cm]{geometry}

% Useful packages
\usepackage{amsmath}
\usepackage{amsthm}
\usepackage{graphicx}
\usepackage[colorlinks=true, allcolors=blue]{hyperref}
\usepackage{amsfonts}
\usepackage{cases}
\usepackage{bm}
\usepackage{tikz}
\usepackage{subfigure}
\usepackage{float}
\usepackage[dvipsnames]{xcolor}

\newtheorem{lemma}{Lemma}[section]
\newtheorem{proposition}{Proposition}[section]
\newtheorem{definition}{Definition}
\newtheorem{theorem}{Theorem}[section]
\newtheorem{remark}{Remark}
\newtheorem{example}{Example}
\newtheorem{corollary}{Corollary}[section]

\title{The weighted Forman and Lin-Lu-Yau Ricci flow on graphs}
\author{Shuliang Bai, Shuang Liu, Xin Lai}
\date{June 2025}

\begin{document}

\maketitle
\begin{abstract}
In this paper, we propose a type of Ricci flow on graphs where the probability distribution for the Lin-Lu-Yau curvature remains constant over time, and also study the related Forman curvature flow. These two curvature flows coincide on trees. We first prove the existence and uniqueness of solutions for both curvature flows in general graphs. Then, we obtain that the normalized curvature flow on trees converges to a constant curvature metric, and under the uniform measure, a complete classification of trees can be obtained based on the convergence results.
\end{abstract}
\noindent \textbf{Keywords:} Ricci flow;  Lin-Lu-Yau curvature; Forman curvature; graph; tree

\section{Introduction}

%Over the past few decades, the analysis of networks has permeated numerous disciplines ranging from biology and sociology to computer science and finance, we refer the readers to \cite{}. Understanding the structure and dynamic evolutionary processes of graphs and networks represents a huge challenge. 
In differential geometry, Ricci flow is a powerful geometric evolution tool proposed by Hamilton \cite{H82}, whose most celebrated achievement is the proof of the Poincaré conjecture by Perelman \cite{P02}. On  a manifold $(M,g(t))$, Ricci flow is
\[\partial_tg_{ij} = -2 R_{ij},\]
where \( g_{ij} \) is the metric tensor and $R_{ij}$ is Ricci curvature tensor. Ricci flow achieves the homogenization of the geometry of a manifold by evolving the metric at a rate proportional to its Ricci curvature. Inspired by this, it is natural to establish corresponding Ricci flows to understand and steer the evolution of graphs. A discrete version of Ricci flow was suggested by Ollivier \cite{Ollivier}  by letting the distance on graphs evolve through coarse
Ricci curvature
\begin{equation}\label{eq:orig}
    \frac{d}{dt}d_{xy}(t)=\kappa^o_{xy}(t)d_{xy}(t),
\end{equation}
where $d_{xy}(t)$  and $\kappa^o_{xy}(t)$ are the distance and Ollivier curvature on two distinct vertices $x$ and $y$ of the graph, respectively.  The discrete Ricci flow formula serves as an analogue of the continuous Ricci flow on discrete graph structures. It drives the iterative evolution of distance through Ricci curvature, causing the network to become more geometrically structured, thereby revealing or enhancing its intrinsic community (or modular) organization.
In \cite{NLLG19}, by discretizing time, a slightly different Ricci flow 
%from  \eqref{eq:orig}
%\[
%\omega_{xy}(i+1) = d_{xy}(i) - \kappa_{xy}^o(i) d_{xy}(i)
%\]
is used for community partitioning in complex networks.
%, where $\omega_{xy}$ is the weight of the edge $xy$ and the distance between two distinct vertices is defined by
After this, lots of applications such as \cite{Farooq2017Network, Lai2022Normalized, Lai2023Deeper,Yadav2023Discrete, Zheng2026RicciGraphDTA} utilize discrete Ricci curvature to discover graph data geometry and enhance the effectiveness of graph data analysis. %Specifically, for complex biomedical and brain network data, discrete Ricci curvature has recently been applied to reveal meaningful structural and functional information, for example, \cite{Zheng2026RicciGraphDTA} claims that incorporating curvature in graph neural networks enhances the effectiveness in drug-target affinity detection, and curvature measures can sensitively capture age-related changes in human brain~\cite{Farooq2017Network, Yadav2023Discrete}. 

For completeness of the theory,
Bai et al. \cite{bailin} applied Lin-Lu-Yau curvature $\kappa$, as a modified Ollivier curvature, to \eqref{eq:orig} and set the weight of each edge to the distance between its two endpoints.
%, i.e.\begin{equation}\label{condition0}
%    \omega_e(t)= d_{xy}(t)
%\end{equation}
%where
%\begin{equation}\label{dis}
 %   d_{xy}:=\inf_{\gamma\in %\Gamma_{xy}}\sum_{e\in \gamma}\omega_e,
%\end{equation}
%where $\Gamma_{xy}$ is a set of all paths connecting vertices $x$ and $y$. 
They
proved the existence and uniqueness of solutions to the Lin-Lu-Yau curvature flow
\begin{equation}\label{or}
   \frac{d}{dt}\omega_e(t)=-\kappa_e(t)\omega_e(t).
\end{equation}
However, when the edge weight fails to satisfy the triangle inequality under the evolution of \eqref{or}, it triggers edge removal operations during the evolution process. 
%under the assumption that every edge $e=(x,y)$ is the unique shortest path between its vertices, i.e., 
%\begin{equation}\label{condition0}
%    \omega_e(t)= d_{xy}(t)
%\end{equation}
%for any $t>0$, where
%\begin{equation}\label{dis}
%    d_{xy}:=\inf_{\gamma\in \Gamma_{xy}}\sum_{e\in \gamma}\omega_e,
%\end{equation}
%where $\Gamma_{xy}$ is a set of all paths connecting vertices $x$ and $y$. Otherwise, it leads to edge removal operations during the evolution process. 
To avoid this, Ma and Yang \cite{MY24} regarded the edge weight $\omega$ as a metric and calculated the distance between the endpoints of the edges in the following manner
\begin{equation}\label{dis}
  d_e=\inf_{\gamma\in \Gamma_{e}}\sum_{h\in \gamma}\omega_h,
\end{equation}
where $\Gamma_{e}$ is a set of all paths connecting two endpoints of  $e$.
They considered the following modified equation
\begin{equation}\label{or2}
\frac{d}{dt}\omega_e(t)=-\kappa_e(t)d_e(t),
\end{equation} 
and established the existence and uniqueness of a solution to \eqref{or2}. Subsequently, numerous studies have investigated the properties of solutions to this type of equation and lots of applications have been developed. Readers are referred to references \cite{BLLL25,MY25,TMYZ25,MYZZ24} for further details.

Let $G=(V,E,m_1,m_2)$ be an undirected graph, where $m_1$ is the positive measure of the set of vertices $V$ and $m_2$ is the non-negative measure on the set of edges $E\subset V\times V$.  Lin-Lu-Yau curvature on an edge $e=(x,y)$ essentially measures the discrepancy between the Wasserstein distance between two transition probability kernels $m_x^{\epsilon},m_y^{\epsilon}$ and the distance $d$ on $G$, as follows \begin{equation*}\label{lly}
    \kappa(x,y):=\lim_{\epsilon\rightarrow 0^+}\frac{1}{\epsilon}\left(1-\frac{W(m_x^\epsilon,m_y^\epsilon)}{d(x,y)}\right).
\end{equation*}
%see Section 2.1 in details. 
%The calculation of Ollivier curvature and Lin-Lu-Yau curvature depends on the distance, while the transition kernel may be chosen to either remain fixed or evolve over time. 
It is worth noting that both equations \eqref{or} and \eqref{or2} involve the evolution of the transition probability kernel $m_x^\epsilon,m_y^\epsilon$ over time. In this paper, we keep the transition probability kernels fixed over time (later defined by the vertex measure $m_1$  and the edge measure $m_2$, see Section \ref{2}), and define the distance between two vertices by assigning a weight or metric $\omega$ to each edge, as shown in \eqref{dis}. The change in the weighted Lin-Lu-Yau curvature with respect to time stems only from the variation in the metric or weight. 
%Furthermore, in order not to violate the triangular inequality property of the distance, we are back to \eqref{eq:orig}, allowing the distance not the weight to evolve with Lin-Lu-Yau curvature. While the distance function $d_{xy}$ (defined as \eqref{dis}) is not pointwise differentiable with respect to $t$, its absolute continuity (see Lemma \ref{lem1}) allows us to investigate the following Ricci flow
%\begin{equation}\label{flow-distance}
%    \frac{d}{dt}d_{xy}(t)=-\kappa_{xy}(t)d_{xy}(t),
%\end{equation}
%almost everywhere on $(0,+\infty)$, where $\kappa_{xy}$ is Lin-Lu-Yau curvature of edge $xy$ and its transition probability kernel does not evolve over time (defined as \eqref{lly}).
In this paper, similar to \eqref{or}, we consider 
\begin{equation}\label{main-equation}
    \frac{d}{dt}\omega(t,e)=-\kappa_{\omega}(t,e)\omega(t,e),
\end{equation}
where $\kappa_{\omega}$  is the weighted Lin-Lu-Yau curvature corresponding to the metric $\omega$, that is, the transition probability kernels (i.e. $m_x^\epsilon,m_y^\epsilon$) are fixed over time and the distance is defined as \eqref{dis}. Similarly to \eqref{or}, it is necessary to assume that the edge metric in the evolution of the weighted Lin-Lu-Yau curvature flow \eqref{main-equation}  has to equal precisely to the distance between its endpoints; otherwise, edge removal operations will be introduced during the evolution process. 
\begin{theorem}[see Theorem \ref{main1}] There exists a  unique positive solution to 
\eqref{main-equation}
for all $t\in(0,\infty)$ and any edge $e\in E$ with a positive initial value $\omega_0$.
\end{theorem}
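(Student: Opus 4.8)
The plan is to read \eqref{main-equation} as an autonomous system of ordinary differential equations for the vector $\omega=(\omega_e)_{e\in E}\in\mathbb{R}^{|E|}$, posed on the open region
\[
\mathcal{C}=\{\omega:\ \omega_e>0\ \text{for all }e,\ \text{and}\ \omega_e=d_e\ \text{for every edge }e\},
\]
i.e.\ the set of positive weights for which each edge realizes the distance \eqref{dis} between its endpoints (on a tree this is the whole positive cone). Writing $F_e(\omega)=-\kappa_{\omega}(e)\,\omega_e$, existence and uniqueness on a maximal interval will follow from the Picard--Lindel\"of theorem once I show that $\omega\mapsto F(\omega)$ is locally Lipschitz on $\mathcal{C}$; I would then extend the local solution to all of $(0,\infty)$ by an a priori bound keeping $\omega(t)$ inside a compact subset of $\mathcal{C}$ on every finite interval.

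The core step is the local Lipschitz continuity of $\omega\mapsto\kappa_{\omega}(e)$, and this is where I expect the real work. I would establish it along the chain $\omega\mapsto d\mapsto W\mapsto\kappa$. First, each distance $d(u,v)=\inf_{\gamma}\sum_{h\in\gamma}\omega_h$ is a minimum of finitely many linear functions of $\omega$, hence concave, piecewise linear, and locally Lipschitz; on $\mathcal{C}$ the relevant distances among neighbours reduce to the incident weights. Second, for fixed $\epsilon$ the Wasserstein distance $W(m_x^\epsilon,m_y^\epsilon)$ is the optimal value of a finite transportation linear program whose cost coefficients are the $d(u,v)$, so by stability of linear-program values it is Lipschitz in the cost matrix and hence in $\omega$. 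Third, using the limit-free form of the curvature, for $e=(x,y)$ and small $\epsilon$ one has $\kappa_{\omega}(e)=1-\frac{1}{d(x,y)}\sup_{\phi}\sum_z\phi(z)\bigl(m_x(z)-m_y(z)\bigr)$, the supremum over $1$-Lipschitz $\phi$ with $\phi(x)-\phi(y)=d(x,y)$, again a linear-program value, so the curvature is Lipschitz in $d$; since $d(x,y)=\omega_e$ is bounded away from $0$ and $\infty$ on compact subsets of $\mathcal{C}$, dividing and multiplying by $\omega_e$ preserves local Lipschitz continuity. The main obstacle is precisely pushing Lipschitz continuity through the optimal-transport linear program and the $\epsilon\to0^+$ limit simultaneously; note that $\kappa_{\omega}$ is generally not differentiable (the minimizing path or optimal plan can switch), but Lipschitz regularity, which is all Picard--Lindel\"of requires, survives across these kinks and also delivers uniqueness.

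For global existence I would use two one-sided curvature bounds together with the multiplicative structure of the flow. Since each $m_x$ is a probability measure and $\omega_e=d(x,y)$, testing the curvature formula above against $\phi(z)=\max\{0,\,d(x,y)-d(x,z)\}$ yields the uniform upper bound $\kappa_{\omega}(e)\le 2$ for every edge at every time; integrating $\frac{d}{dt}\omega_e=-\kappa_{\omega}(e)\,\omega_e$ then gives $\omega_e(t)\ge\omega_e(0)\,e^{-2t}>0$, so no edge becomes extinct in finite time and positivity is automatic. Conversely, at an edge $e^{\ast}=(x,y)$ realizing $M(t)=\max_e\omega_e(t)$ all incident weights are $\le M(t)$, and the dual estimate $\kappa_{\omega}(e)\ge-\frac{1}{\omega_e}\bigl(\sum_{z\sim x}\omega_{xz}m_x(z)+\sum_{z\sim y}\omega_{yz}m_y(z)\bigr)$ gives $\kappa_{\omega}(e^{\ast})\ge-2$; hence the upper Dini derivative of $\log M$ is at most $2$ and $M(t)\le M(0)e^{2t}<\infty$, ruling out blow-up. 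These bounds confine $\omega(t)$ to a compact subset of $\mathcal{C}$ on every finite interval, so the maximal solution extends to $(0,\infty)$. A final point to verify is that the flow does not leave $\mathcal{C}$, i.e.\ that no shortcut develops and $\omega_e=d_e$ persists; on trees this is vacuous, and in general I would check it is maintained under the evolution or build it into the admissible initial data.
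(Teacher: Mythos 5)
Your overall skeleton---prove that $\omega\mapsto\kappa_\omega(e)\,\omega(e)$ is locally Lipschitz, invoke Picard--Lindel\"of, then use two-sided exponential a priori bounds to rule out finite-time extinction or blow-up and continue the solution to all of $(0,\infty)$---is exactly the strategy of the paper's proof of Theorem \ref{main1}, and your global-existence half is essentially sound in structure. One correction there: the constants $\pm 2$ in your curvature bounds are specific to the case $\operatorname{Deg}\equiv 1$. With the paper's kernel \eqref{measure} and general measures $m_1,m_2$, the correct uniform bounds coming from your own test functions are $\kappa_\omega(e)\le \operatorname{Deg}(x)+\operatorname{Deg}(y)$ and, at an edge maximizing the weight, $\kappa_\omega(e^*)\ge -\bigl(\operatorname{Deg}(x)+\operatorname{Deg}(y)\bigr)$; for instance on the unweighted triangle with $m_1=m_2\equiv 1$ one computes $\kappa_\omega=3>2$. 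Your argument survives with these measure-dependent constants, so this is a fixable slip, not the real problem.

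The real problem is that the heart of the theorem---local Lipschitz continuity of $\omega\mapsto\kappa_\omega(e)$---is never actually proved: you outline the chain $\omega\mapsto d\mapsto W\mapsto\kappa$ and then explicitly defer ``the main obstacle,'' which is precisely the step where the outlined argument fails. For fixed $\epsilon$, LP stability indeed gives $|W(m_x^\epsilon,m_y^\epsilon)|_{\omega_1}-W(m_x^\epsilon,m_y^\epsilon)|_{\omega_2}|\le C\|\bm\omega_1-\bm\omega_2\|_\infty$, but the curvature \eqref{lly} carries a factor $1/\epsilon$, so this only yields a Lipschitz constant of order $1/\epsilon$ for the difference quotient, and a pointwise limit of functions with unbounded Lipschitz constants need not be Lipschitz. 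Closing this gap requires either (i) showing that for $\omega$ in a compact subset of the non-degenerate cone there is a single $\epsilon_0>0$ at which the limit in \eqref{lly} is already attained (a weighted, locally-$\omega$-uniform version of the idleness-threshold results of Bourne--Cushing--Liu--M\"unch--Peyerimhoff, which you would have to prove), or (ii) a direct analysis of the limit-free form $\inf_{f\in\mathcal F_\omega}\nabla_{xy}\Delta f$, where both the linear objective and the constraint polytope $\{f:\,|f(u)-f(v)|\le d_\omega(u,v),\ f(x)-f(y)=d_\omega(x,y)\}$ move with $\omega$, which needs polyhedral stability estimates you do not supply. (Your displayed step-3 identity also drops the $1/\epsilon$ entirely, conflating the fixed-$\epsilon$ Kantorovich dual with the limit.) The paper sidesteps all of this with the one idea your proposal is missing: by the Jost--M\"unch equivalence (Theorem \ref{th:equi}), $\kappa_\omega(e)=\max_K F_{K,\omega}(e)$, and each $F_{K,\omega}(e)=C_1(e)-\sum_{e'\ne e}C_2(e')\,\omega(e')/\omega(e)$ has coefficients independent of $\omega$; after showing the maximum may be restricted to complexes whose $2$-cell measures are bounded by a fixed $M$, the curvature becomes a maximum of an explicit family of functions sharing a common Lipschitz constant, hence itself Lipschitz---no optimal transport and no $\epsilon$-limit appear at all.
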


\begin{remark} If one wishes to avoid edge deletion operations, the modified version of the  weighted Lin-Lu-Yau curvature flow, sharing the same form as \eqref{or2}, can be considered, for which the existence and uniqueness of the solution can also be established. The proof follows a largely similar approach of Theorem \ref{main1}. In particular, for trees, due to the cycle-free structure, the edge weight is always equal to the distance of its two endpoints.
    %, as expressed in \eqref{condition0}. 
    Therefore, the two equations \eqref{or} and \eqref{or2} are identical on trees, and can be transformed into a linear system \eqref{fm1} when using the weighted Lin-Lu-Yau  curvature $\kappa_{\omega}$. 
\end{remark}

In addition to Ollivier curvature and Lin-Lu-Yau curvature, curvature flows defined by Forman curvature and Bakry-Émery curvature have also been extensively studied, see for example \cite{CKLMPS25,HLW24,axioms5040026,10.1093/comnet/cnw030}. Other combinatorial flows have been developed to explore various aspects of discrete curvature and metric, see for example \cite{comflow, CombinatorialCalabiFlows,GLICKENSTEIN2005791}.

As well known, Ollivier curvature is an extremal problem based on optimal transport theory. From a computational perspective, cycles, and specifically triangles, exert critical impacts on Ollivier curvature. Beyond trees, lack of  cycles, it lacks a closed-form expression for general graphs, and is consequently difficult to analyze, and exhibits high computational complexity.
By comparison,  Forman curvature on graphs can be computed via a computationally trivial formula while entirely ignoring the influence of cycle structures (see \eqref{forman} in Section 2). However, neglecting cycle structures causes Forman curvature to sacrifice partial local structural information of the graph. This leads to Forman curvature being a mere substitute when other curvature computations are infeasible due to computing power. 
 Recent researches reveal that there is a strong connection between Forman curvature and Ollivier curvature, which  has been studied in \cite{JM21,BP14,TT21}. In particular, an equivalence relation between Ollivier curvature and a new form of Forman curvature for combinatorial cell complexes including general weighted graphs has been established in \cite{JM21}.
 These results demonstrate that Forman curvature possesses profound theoretical value in addition to its computational advantages. We have precisely utilized the equivalence between Forman curvature and Lin-Lu-Yau curvature to prove the existence and uniqueness of the solution to \eqref{main-equation} in Theorem \ref{main1}.

To keep the volume of the entire graph finite, one generally considers 
the normalized  weight
\[\bar{\omega}(t,e)=\frac{\omega(t,e)}{\sum_{h\in E}\omega(t,h)}.\]
This paper also explores the convergence of the solution to Lin-Lu-Yau and Forman flow equations, as well as the convergence properties of the curvature itself. The convergence of solution to the discrete-time Ollivier curvature flow  with specific surgery has been  given in \cite{RF24}. Very recently, \cite{BHLL25} has proven partial convergence results of the solution  to \eqref{or} for trees.

\begin{theorem}[see Theorem \ref{th:main}]
Under \eqref{main-equation} on trees, the weighted Lin-Lu-Yau curvature  converges to a constant as $t\rightarrow\infty$ for any $e\in E$. Moreover, the normalized metric $\bar{\omega}(t,e)$ converges to the constant curvature metric.
\end{theorem}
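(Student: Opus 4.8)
The plan is to turn the tree case into a fixed linear ODE and then run a Perron--Frobenius argument. The first step is to make the linearity recorded in the remark explicit. On a tree all paths are unique, and under the triangle-inequality assumption one has $d_e = \omega(t,e)$ along the flow, so the weighted Lin--Lu--Yau curvature agrees with the weighted Forman curvature and hence admits the closed-form expression underlying \eqref{fm1}. The essential structural feature is that $\kappa_\omega$ is invariant under a global rescaling of all weights: since the transition kernels are fixed, multiplying every $\omega_h$ by a common factor multiplies both the graph distance $d$ and the Wasserstein distance $W$ by that factor, leaving $1 - W/d$ unchanged. Thus $\kappa_\omega(t,e)$ is homogeneous of degree $0$, the product $\kappa_\omega(t,e)\,\omega(t,e)$ is homogeneous of degree $1$, and combined with the Forman formula it is a fixed linear form in the weight vector $\omega(t)=(\omega(t,h))_{h\in E}$. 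Consequently \eqref{main-equation} becomes
\begin{equation*}
\frac{d}{dt}\omega(t) = -M\,\omega(t), \qquad \omega(t)=e^{-Mt}\omega_0,
\end{equation*}
where $M$ is a constant $|E|\times|E|$ matrix depending only on the tree and the fixed measures $m_1,m_2$.

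The second step is a spectral analysis of $M$. I would first argue that $-M$ is a Metzler matrix (nonnegative off-diagonal entries): this is exactly the condition that the positive orthant be forward-invariant for $\dot\omega=-M\omega$, and such invariance is already guaranteed by the positivity of the solution in Theorem \ref{main1}. Irreducibility then follows because $M_{eh}\neq 0$ precisely when $e=h$ or $e,h$ share an endpoint, so the interaction digraph of $M$ is the line graph of the tree, which is connected. Perron--Frobenius for irreducible Metzler matrices then supplies a simple real eigenvalue $\mu_{\max}$ of $-M$ with strictly largest real part, together with strictly positive right and left eigenvectors $v$ and $\ell$; writing $\lambda:=-\mu_{\max}$, this is the eigenvalue of $M$ with strictly smallest real part. (Note that even if $-M$ is imprimitive, the other eigenvalues of maximal modulus are complex and have strictly smaller real part, so $\mu_{\max}$ is strictly dominant in real part, which is what the continuous-time asymptotics require.)

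The third step extracts both convergence statements. Decomposing along the spectrum gives $\omega(t)=e^{-Mt}\omega_0 = e^{\mu_{\max}t}\bigl(c\,v + o(1)\bigr)$ with $c=\langle\ell,\omega_0\rangle/\langle\ell,v\rangle>0$, since $\omega_0>0$ and $\ell>0$. Normalizing cancels the scalar prefactor, so $\bar\omega(t,e)\to v_e/\sum_{h}v_h$, the normalized Perron eigenvector. For the curvature, $\kappa_\omega(t,e)=(M\omega(t))_e/\omega(t,e)\to (Mv)_e/v_e=\lambda$, the \emph{same} constant for every edge $e$. Finally, $Mv=\lambda v$ reads $\kappa_\omega(e)\,v_e=\lambda v_e$ at the limiting metric $v$, i.e. every edge carries curvature $\lambda$; hence the normalized limit $v/\sum_h v_h$ is precisely a constant-curvature metric, as claimed.

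The hard part will be the sign and irreducibility bookkeeping in the second step. The cleanest route is to harvest the Metzler property directly from the positivity in Theorem \ref{main1} rather than from the explicit Forman coefficients, and to verify simplicity and strict real-part dominance of $\mu_{\max}$ via irreducibility of the line graph. If the positivity-to-Metzler implication required more care than expected, the fallback would be to prove invariance of the positive cone and contractivity of the normalized flow directly from the structure of $M$, but I expect the Perron--Frobenius argument to go through once the linearization \eqref{fm1} is in hand.
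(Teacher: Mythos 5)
Your proof is correct, and its overall architecture is the same as the paper's: reduce the tree case to the constant-coefficient linear Forman system \eqref{fm1}, note that the interaction graph of the coefficient matrix is the line graph of the tree (connected, hence irreducible), invoke Perron--Frobenius to obtain a simple dominant eigenvalue with positive eigenvector, and read off both limits from the resulting asymptotics. The genuine difference lies in how the non-symmetry of the coefficient matrix is handled. The paper exploits special structure of the Forman coefficients: conjugating by $\mathbf{M}=\operatorname{diag}(\sqrt{m_2(e_1)},\dots,\sqrt{m_2(e_n)})$ yields a real \emph{symmetric} matrix $\tilde{\mathbf{F}}=\mathbf{M}\mathbf{F}\mathbf{M}^{-1}$, so the spectral theorem gives the explicit expansion \eqref{solu}, $\omega(t,e_l)=\sum_i c_i(e_l)e^{\lambda_i t}$ with real eigenvalues, and Perron--Frobenius applied to the shifted nonnegative matrix $\tilde{\mathbf{F}}+\beta\mathbf{I}$ (Proposition \ref{irreducibleF}) gives simplicity of $\lambda_n$ and positivity of $c_n(e)$; Theorem \ref{th:main} then follows by dividing numerator and denominator by $e^{\lambda_n t}$. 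You instead work directly with the nonsymmetric Metzler matrix $-M=\mathbf{F}$, using Perron--Frobenius for irreducible essentially-nonnegative matrices: a simple spectral abscissa with strictly dominant real part, positive left and right eigenvectors $\ell, v$, and the spectral projection $v\ell^T/\langle\ell,v\rangle$. Both routes are sound; yours is more robust in that it never uses diagonal symmetrizability of $\mathbf{F}$ (a special feature of these coefficients), at the cost of heavier nonsymmetric machinery --- in particular you must, and correctly do, address strict real-part dominance in the imprimitive case, a point that is automatic for the paper since symmetry forces all eigenvalues to be real. The paper's symmetrization also pays dividends downstream (the explicit coefficients $c_i(e)$ are reused in the corollary bounding $F_\omega(\infty)$ and in the Section 4 classification), which your projection formulation yields less explicitly. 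Two small remarks: deriving the Metzler property from the positivity of solutions in Theorem \ref{main1} is valid but roundabout, since the off-diagonal entries of $\mathbf{F}$ in \eqref{fm1} are $m_2(e_u)/m_1(u)\geq 0$ by inspection; and in the imprimitive case the eigenvalue $-\rho$ of period-two type is real rather than complex, though your conclusion (strictly smaller real part) still holds.
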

\begin{remark}
    %不同的曲率对应的曲率流，树上的收敛性是有本质上的不同
    The evolution of its curvature flow is fundamentally different between keeping the transition probabilities fixed versus allowing them to change over time. Compared with the convergence result in \cite{BHLL25},  there exist non-caterpillar trees whose Ricci flow \eqref{or}  does not converge to a constant curvature metric.
    %the normalized weights  converge to positive values corresponding constant zero curvature only if the tree is a caterpillar tree in the evolution of \eqref{or}. 
\end{remark}

Moreover, in the case of $m_1=m_2\equiv1$, a complete classification for the convergence of the Ricci flow on trees is established.
\begin{theorem}[see Theorem \ref{the:4.1}]
\label{coro1}
 For a tree $T=(V,E)$ with $n$ edges,  we assume that $m_1=m_2\equiv1$. Let $\omega(t,e)$ be the solution to the Ricci flow \eqref{main-equation}. Then:
 \begin{enumerate}
    \item If  $T$ is a path, then $\omega(t,e)$ for any $e\in E$  converges to $0$, and the Ricci curvature $\kappa_w(t)$ converges to a positive constant.   
    \item For $K_{1,3}$,  $w(t,e)$ for any $e\in E$  converges to a nonzero real number, and the Ricci curvature $\kappa_w(t)$ converges to $0$.
    \item If $\max_{x\in V}d(x)\geq 3$, and $T$ is not $K_{1,3}$,  then $\omega(t,e)$ for any $e\in E$ diverges to infinity, and the Ricci curvature $\kappa_w(t)$ converges to a negative constant.
 \end{enumerate}
\end{theorem}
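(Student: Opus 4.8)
The plan is to exploit the linearity of the flow on trees established in the Remark following \eqref{main-equation}. Under $m_1=m_2\equiv 1$ the weighted Lin--Lu--Yau curvature on a tree satisfies $\kappa_\omega(t,e)\,\omega(t,e)=2\,\omega(t,e)-\sum_{h\sim e}\omega(t,h)$, where $h\sim e$ ranges over the edges sharing exactly one endpoint with $e$; hence \eqref{main-equation} reduces to the constant-coefficient linear system \eqref{fm1}, which I write as $\tfrac{d}{dt}\vec\omega=A\vec\omega$ on $\mathbb{R}^{n}$ with $A=A_{L(T)}-2I$, where $A_{L(T)}$ is the adjacency matrix of the line graph $L(T)$. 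The first facts to record are that $A$ is symmetric and that, since $T$ is connected with $n\ge 1$ edges, $L(T)$ is connected; Perron--Frobenius then furnishes a simple largest eigenvalue $\lambda_{\max}$ of $A$ with a strictly positive eigenvector $v_1$.

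With $A$ symmetric I would diagonalize it in an orthonormal eigenbasis and write $\vec\omega(t)=\sum_i e^{\lambda_i t}\langle v_i,\vec\omega_0\rangle v_i$. Because $v_1>0$ and $\vec\omega_0>0$ we have $\langle v_1,\vec\omega_0\rangle>0$, so the large-$t$ behaviour is governed entirely by the sign of $\lambda_{\max}$: if $\lambda_{\max}<0$ then $\omega(t,e)\to 0$; if $\lambda_{\max}=0$ then $\vec\omega(t)\to\langle v_1,\vec\omega_0\rangle v_1$, a nonzero limit; and if $\lambda_{\max}>0$ then every $\omega(t,e)\to\infty$. In all three cases $A\vec\omega(t)\sim\lambda_{\max}\vec\omega(t)$, so $\kappa_\omega(t,e)=-\dot\omega(t,e)/\omega(t,e)\to-\lambda_{\max}$ for every edge $e$; this already yields convergence of the curvature to a single constant. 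It therefore remains only to pin down the sign of $\lambda_{\max}$ case by case.

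The crux is to identify $\lambda_{\max}$ with a Laplacian eigenvalue of $T$. Let $B$ be the $|V|\times|E|$ unsigned incidence matrix; then $B^{\top}B=2I+A_{L(T)}$, so $\lambda_{\max}(A)=\lambda_{\max}(B^{\top}B)-4$, while $BB^{\top}=D+A_T$ is the signless Laplacian $Q$ of $T$. Since $B^{\top}B$ and $BB^{\top}$ share their nonzero spectrum and $T$ is bipartite (so $Q$ is cospectral with $L_T=D-A_T$), I obtain $\lambda_{\max}(A)=\mu_{\max}(T)-4$, where $\mu_{\max}(T)$ is the largest Laplacian eigenvalue of $T$. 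The classification then follows by comparing $\mu_{\max}(T)$ with $4$: for a path $\mu_{\max}(P_n)=2+2\cos(\pi/n)<4$, giving case (1); for $K_{1,3}$ the Laplacian spectrum is $\{0,1,1,4\}$, giving $\mu_{\max}=4$ and case (2); and for the remaining trees I would invoke the classical bound $\mu_{\max}(G)\ge\Delta(G)+1$, with equality iff $\Delta(G)=|V|-1$, so that $\Delta\ge 4$ forces $\mu_{\max}\ge 5>4$, while $\Delta=3$ with $T\ne K_{1,3}$ forces $|V|\ge 5>\Delta+1$ and hence $\mu_{\max}>4$, giving case (3).

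The main obstacle I anticipate is this spectral identification together with its sharp equality case: passing from the flow matrix to the line-graph adjacency, then to the (signless) Laplacian via the incidence matrix and bipartiteness, and finally applying the precise characterization of the extremal trees with $\mu_{\max}(T)=4$. The asymptotic read-off from $\lambda_{\max}$ is routine once one has verified that $\lambda_{\max}$ is simple with a strictly positive eigenvector, so that a positive initial datum $\vec\omega_0$ cannot lie in its orthogonal complement, and once one checks that the three degree conditions partition all trees exactly, with the limiting curvature equal to $4-\mu_{\max}(T)$ in every case.
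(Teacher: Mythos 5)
Your proposal is correct, and while its first half retraces the paper's machinery, the decisive step is genuinely different. Like the paper, you reduce the flow on a tree with $m_1=m_2\equiv 1$ to the constant linear system with matrix $\mathbf{F}=\mathbf{B}-2\mathbf{I}$ ($\mathbf{B}$ the line-graph adjacency matrix), and your Perron--Frobenius/diagonalization read-off of the asymptotics from the sign of $\lambda_{\max}$ is exactly the content of Proposition \ref{irreducibleF}, Proposition \ref{the:1.1} and Theorem \ref{th:main}, which the paper's proof of Theorem \ref{the:4.1} simply invokes. Where you diverge is in determining that sign: the paper stays inside the line graph, computing $\lambda_{\max}(\mathbf{B})$ directly (path line graph is a path, giving $2\cos(\pi/(n+1))$; star line graph is complete, giving $\mathbf{F}=-3\mathbf{I}+\mathbf{J}$; a Rayleigh-quotient argument on the clique $K_d\subseteq L(T)$ gives $\lambda_{\max}(\mathbf{B})\geq \Delta-1$; and the residual case $\Delta=3$, $T\neq K_{1,3}$ is settled by the Paw graph's eigenvalue exceeding $2.17$ plus subgraph monotonicity). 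You instead pass through the incidence factorization $B^{\top}B=2\mathbf{I}+\mathbf{B}$, $BB^{\top}=Q$, and bipartiteness of trees to get the clean identity $\lambda_{\max}(\mathbf{F})=\mu_{\max}(T)-4$, so that all three cases become a single comparison of the tree's Laplacian spectral radius with $4$, and the hardest case follows from the classical Grone--Merris--Sunder bound $\mu_{\max}\geq\Delta+1$ with equality iff $\Delta=|V|-1$. Your route is more unified and yields the limiting curvature in closed form as $4-\mu_{\max}(T)$, but its burden is precisely that equality characterization: the inequality alone does not separate $K_{1,3}$ from larger trees of maximum degree $3$, so you should cite the equality case explicitly (it is a real theorem, for connected graphs, but it is doing the same work the paper does by hand with the Paw graph). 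The paper's argument, by contrast, is self-contained, proving its eigenvalue bounds from scratch. One cosmetic slip: the path with $n$ edges has $n+1$ vertices, so the relevant value is $\mu_{\max}=2+2\cos(\pi/(n+1))$ rather than $2+2\cos(\pi/n)$; both are strictly less than $4$, so your conclusion (and its agreement with the paper's limit $2(1-\cos(\pi/(n+1)))$) is unaffected.
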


Similarly, we can also consider the Forman curvature flow on graphs, i.e., using Forman curvature in \eqref{main-equation}. Since Forman curvature on graphs does not account for cycles, Forman curvature flow thus becomes a linear system (see \eqref{fm1}). The existence, uniqueness and convergency of solutions to Forman curvature flow on general graphs are established under the surgery in this paper, as stated in Theorem \ref{th:exist} and Theorem \ref{th:main}.
%\begin{corollary}
%    The normalized metric $\bar{\omega}(t,e)$ for any edge $e$ of Lin-Lu-Yau curvature flow  converges to the constant curvature metric on general graph.
%\end{corollary}

The structure of the paper is as follows: In Section 2,  the definitions of the two types of curvature and their equivalence properties are introduced. In Section 3, we prove the existence, uniqueness, and convergence of solutions to Lin-Lu-Yau curvature and Forman curvature flow equations. In Section 4, we examine and simulate the convergence of solutions of the curvature flow on several specific graphs using two special  measures. Examples demonstrate that the convergence results of the Ricci flow can vary under different measures.

\section{The weighted Ricci curvature on graphs}\label{2}
Let $G=(V,E)$ be an undirected finite graph, where $V$ is the set of vertices and $E\subset V\times V$ is the set of edges. We write $x\sim y$ when $(x,y)\in E$.  A graph is called connected if there exists a path $x=x_0\sim x_1\sim\cdots\sim x_k=y$ for any $x,y\in V$. In this paper, we always assume that $G$ is connected.
One may endow measures on a graph with $m_1:V\rightarrow \mathbb{R}_+$  the vertex measure, and $m_2:E\rightarrow \mathbb{R}_+$ the edge measure. We assume that the edge measure is symmetric, i.e. $m_2(x,y)=m_2(y,x)$ for any $(x,y)\in E$. In the existing literature, there are two typical choices of measures:
\begin{enumerate}\label{choice}
    \item \textbf{Uniform measures:} each vertex and edge has a measure \(1\), i.e., \(m_1 \equiv 1\) and \(m_2 \equiv 1\);
    \item \textbf{Normalized measures:} each vertex measure equals the total measure 
    of its incident edges, i.e., $
    m_1(x) = \sum_{y \sim x}m_2(x,y). $
\end{enumerate}
We denote
\[\operatorname{Deg}(x):=\frac{\sum_{y\sim x} m_2(x,y)}{m_1(x)}.\]
We denote by $C(V)$ the space of functions on $V$.
Let $\omega:E\rightarrow (0,\infty)$ be the metric or weight on $E$, and the path distance $d_\omega:V\times V\rightarrow [0,\infty]$ is generated by $\omega$ via
\[d_{\omega}(u,v):=\inf\left\{\sum_{k=1}^n\omega(v_k,v_{k-1}):u=v_0\sim \cdots v_n=v\right\},\]
where the infimum is taken over all paths between $u$ and $v.$ 
%In this paper, we denote by $G=(V,E,m_1,m_2,\omega)$ a weighted graph. We will always assume that $\omega$ is non-degenerate, i.e., every edge $e = (u, v)\in E$ is the unique shortest path between its vertices, i.e.,
%\[\omega(x,y)<d_{\omega}(x,y).\]
%Otherwise, we delete the edge $(x,y)$.
Let $d(x)$ be the degree at $x\in V$ denoted  by
\[d(x):=\#\{y\in V: y\sim x\}.\]
Set $C(V)$ be the space of real function on $V$, and $\ell^\infty(V)$ be the space of bounded function on $V$ with the standard norm $\|\cdot\|_\infty$.

\subsection{The weighted Lin-Lu-Yau curvature on graphs}
For sufficiently small $\epsilon > 0$, we introduce a finitely supported transition probability kernels proposed in \cite{MW}, as follows:
\begin{equation}\label{measure}
m_x^\epsilon(y):=\left\{\begin{aligned}
    &1-\epsilon \operatorname{Deg}(x),&~~ y=x,\\
    &\frac{\epsilon m_2(x,y)}{m_1(x)},&~~y\sim x,\\
    &0,&~~\mbox{otherwise.}
\end{aligned}
  \right.
\end{equation}

  \begin{remark} 
Compared to the commonly used probability measure $\mu_x^{\alpha}$ (see, for example, \cite{bailin,LLY11}),
the measure $m_x^\epsilon$ provides a more general formulation incorporating laziness. 
In particular, $m_x^\epsilon$ differs from $\mu_x^{\alpha}$ in two key respects:
\begin{itemize}
    \item The \emph{lazy coefficient} is point-dependent, given by 
    $1 - \epsilon \operatorname{Deg}(x)$, rather than a fixed constant $\alpha$;
   \item The vertex measure $m_1$ and edge measure $m_2$ are not necessarily coupled
 and may be chosen independently.
\end{itemize}
In the special case where $\operatorname{Deg}(x) \equiv 1$ and $\epsilon = 1-\alpha$, 
the two notions coincide.
\end{remark}

Suppose that \( x\neq y\in V \), \( m_x^\epsilon \) and \( m_y^\epsilon \) are two probability distributions defined as \eqref{measure}. The Wasserstein distance $W(m_x^\epsilon,m_y^\epsilon)$ is given by 
\[W(m_x^\epsilon,m_y^\epsilon):=\inf_{A \in \Pi(m_x^\epsilon, m_y^\epsilon)} \sum_{u,v \in V} d_{\omega}(u,v) A(u,v),\]
where the transport plan \( A \) transporting \( m_x^\epsilon \) to \( m_y^\epsilon \) is a mapping \( A : V \times V \to [0, 1] \) satisfying

\[
\begin{cases}
\sum_{v \in V} A(u,v) = m_x^\epsilon (u), & u \in V, \\
\sum_{u \in V} A(u,v) = m_y^\epsilon (v), & v \in V, \\
A(u,v) \geq 0.
\end{cases}
\]
And the minimum is taken over all transport plans from \( m_x^\epsilon \) to \( m_y^\epsilon \). From the standard definition of Ricci curvature, the weighted Lin-Lu-Yau Ricci curvature with respect to $\omega$ is given by, for $x\neq y$ 
\begin{equation}\label{lly}
    \kappa_{\omega}(x,y):=\lim_{\epsilon\rightarrow 0^+}\frac{1}{\epsilon}\left(1-\frac{W(m_x^\epsilon,m_y^\epsilon)}{d_\omega(x,y)}\right).
\end{equation}
One equivalent form of Lin-Lu-Yau curvature was proposed by Münch et al.\cite{MW}, which is limit-free. Denote by
      \[\nabla_{xy}f := \frac{f(x) - f(y)}{d_{\omega}(x, y)}\]
the gradient of $f$, and by
\[
\operatorname{Lip}(K) := \{ f \in C(V) : \|\nabla f\|_\infty \leq K \}
\]
the set of all $K$-Lipschitz functions ($K > 0$) on $V$ with respect to the weighted
graph metric $d_\omega$. Then, the weighted Lin-Lu-Yau curvature can be expressed as
\begin{equation*}
\kappa_{\omega}(x,y)=\inf_{f\in \mathcal{F}_{\omega} }\nabla_{xy}\Delta f,
\end{equation*}
where $\mathcal{F}_{\omega}:=\{f\in \operatorname{Lip}(1),\nabla_{xy}f=1\}$, and the Laplacian is denoted by 
\[\Delta f(x)=\frac{1}{m_1(x)}\sum_{y\sim x}m_2(x,y)(f(y)-f(x)),~~\forall x\in V.\]

\subsection{The weighted Forman curvature for  cell complex}\label{weightform}
We  recall the definition of a combinatorial cell complex, which was proposed in \cite{F03,JM21}. Let $X=\cup_{k\geq 0}X_k$ ($X_k$ denotes the set of $k$-dimensional cells) be a finite set and $C(X)$ be the function space $\mathbb{R}^X$. 
Let $m:X\rightarrow (0,\infty)$ be a positive function, interpreted as a measure on the cells. A  cell complex is the tuple $K=(X,\delta,m)$, where $\delta:C(X_k)\rightarrow C(X_{k+1})$ is the co-boundary operator. By abuse of notation, $x = \mathbf{1}_x \in C(X)$ is written for $x \in X$.  For all $v, z \in X$ and all $k \in N_0$, the action of the co-boundary operator satisfies
\[\delta v(z)=\{-1,0,1\}.\]
Moreover, $\delta v(z)=0$ if $\dim(z)-\dim(v)\neq1$, and $\delta v(z)\neq0$ iff $z\succ v$, which denotes that \( x \) is a  face of \( z \).
Moreover,  a cell complex needs to satisfy the following compatibility conditions. For all $v, z, z' \in X,$
\begin{itemize}
    \item $|\{w\in X_0:\delta w(x)=1\}|=|\{w\in X_0: \delta w(x)=-1\}|=1$ for all $x\in X_1$.
    \item If $\dim(z)-\dim(v)=2$ and $\{x:v\prec x\prec z\}\neq \emptyset$, then
    \[|\{x:v\prec x\prec z\}|=2\]
    and 
    \[\{-1,1\}\subset\{\delta v(x)\delta x(z):x\in X\}. \]
    \item For all $x,y\prec z, $ there is a sequence $(x=x_0,\cdots,x_n=y)$ with $\delta v_k(v_{k-1})+\delta v_{k-1}(v_k)\neq 0$ for all $k=1,\cdots,n$.
    \item If $\{x:x\prec z\}=\{x:x\prec z'\}$ and $\dim(z)\geq 1$, then $z=z'.$
\end{itemize}
$X$ is equipped with the scalar product via the measure $m$ 
\[\langle f,g\rangle:=\sum_{x\in X}f(x)g(x)m(x), \forall f,g\in C(X).\]
Based on this, the adjoint $\delta^*:C(X_{k+1})\rightarrow C(X_k)$ is defined by
\[\delta^*z(x)=\frac{1}{m(x)}\langle\delta^*z,x\rangle=\frac{1}{m(x)}\langle z,\delta x\rangle=\frac{m(z)}{m(x)}\delta x(z), \forall x,z\in X.\]
By linear extension,
\[\delta^*f(x)=\sum_zf(z)\delta^*z(x).\]
The  Hodge Laplacian $H : C(X_k) \rightarrow C(X_k)$ is defined by
\[H=\delta \delta^*+\delta^*\delta.\]

For every \( x \in X_1 \), there is a unique pair \( (u, v) \) with \( \delta u(x) = -1 \) and \( \delta v(x) = 1 \). The notation \( x = (u, v) = (v, u) \) is adopted, and then \( m(u,v) = m(v,u) = m(x) \) is imposed, with \( m(u,v) = 0 \) if there is no \( x \succ u, v \) or \( u = v \). It turns out that $1$-dimensional cell complexes $(X_0\cup X_1,\delta, m)$ and simple weighted graphs $(V,E,m_1,m_2)$ coincide. Hereafter, we denote $m_i$ by the measure of $X_i$.  Moreover, \( H \) on \( C(X_0) \) is the usual graph Laplacian, for any $u\in X_0$,
\[H f(u)=\frac{1}{m_1(u)}\sum_{v\sim v}m_2(u,v)(f(v)-f(u)), f\in C(X_0).\]
Further details can be found in \cite{JM21}.

Diverging from Forman's original definition based on decomposing the Hodge Laplacian $H$ as a sum of a minimally diagonally dominant operator serving as Bochner Laplacian and a diagonal operator \cite{F03}, J. Jost et. al \cite{JM21} redefine it through the diagonally dominant part of the Hodge Laplacian $H$,  thus naturally extending to the weighted case as follows.
\begin{definition}
For any $x\in X$, the weighted Forman curvature is
\[F_\omega(x)=Hx(x)-\sum_{y\neq x}\frac{\omega(y)}{\omega(x)}|Hy(x)|.\]
\end{definition}
For any $e\in X_1$, the weighted Forman curvature for $e$ is
\begin{equation}\label{Forman0}
F_{\omega}(e)=\sum_{u\prec e}\frac{m_2(e)}{m_1(u)}+\sum_{f\succ e}\frac{m_3(f)}{m_2(e)}-\sum_{e'\neq e}\frac{\omega(e')}{\omega(e)}\left|\sum_{u\prec e,e'}\frac{m_2(e')}{m_1(u)}-\sum_{f\succ e,e'}\frac{m_3(f)}{m_2(e)}\right|.
\end{equation}
If we consider a 1-dimensional cell complex, which corresponds to a graph. In particular, $X_0$ denotes the set of vertices $V$, $X_1$ denotes the set of edges $E$, and there are no 2-dimensional cell complexes. Let $e=(u,v)$, we denote $e_u\sim u$ iff $e_u\succ u$. The weighted Forman curvature on graphs reads as
\begin{equation}\label{forman weight}
  F_{\omega}(e)=\frac{m_2(e)}{m_1(u)}+\frac{m_2(e)}{m_1(v)}-\sum_{e_u\sim u,e_u\neq e}\frac{m_2(e_u)}{m_1(u)}\frac{\omega(e_u)}{\omega(e)}-\sum_{e_v\sim v,e_v\neq e}\frac{m_2(e_v)}{m_1(v)}\frac{\omega(e_v)}{\omega(e)}.
\end{equation}
If the set $\{e_u\in E: e_u\sim u,e_u\neq e\}$ is empty, then the corresponding summation is zero.
With the choice
\[m_i=\frac{1}{w_i}, i=1,2, ~\mbox{and}~ \omega=\sqrt{w_2},\]
\eqref{forman weight} can be expressed in the form
\begin{equation}\label{forman}
F_{w}(e)=\frac{w_1(u)}{w_2(e)}+\frac{w_1(v)}{w_2(e)}-\sum_{e_u\sim u,e_u\neq e}\frac{w_1(u)}{\sqrt{w_2(e_u)w_2(e)}}-\sum_{e_v\sim v,e_v\neq e}\frac{w_1(v)}{\sqrt{w_2(e_v)w_2(e)}},
\end{equation}
which equals the original Forman curvature (see \cite{F03}) divided by $w_2(e).$ Let $w_1=w_2\equiv 1$ in \eqref{forman}, this reduces to the unweighted Forman curvature
\begin{equation*}\label{unforman}
F(e)=4-d(u)-d(v),
\end{equation*}
where $d(u)$ is the number of neighborhoods of $u$.
%测度与度量是独立时，自然具有刚性（即固定曲率度量唯一，在同等倍数放大和缩小的情况下）
%\begin{lemma}
%The weighted Forman curvature is bounded, i.e. 
%\[ F_{\omega}(e)\leq\max_{e\in E}\left(\frac{m_2(e)}{m_1(u)}+\frac{m_2(e)}{m_1(v)}\right)\]
%\end{lemma}

\subsection{The relationship between Lin-Lu-Yau curvature and Forman curvature}
Lin-Lu-Yau curvature $\kappa_\omega$ of $x\in X$ with respect to $\omega$ can be rewritten as 
\begin{equation*}\label{ollivier-cell}
\kappa_\omega(x)=\frac{1}{\omega(x)}\inf_{\delta f(x)=\omega(x),|\delta f|\leq \omega}\delta \delta^*\delta f(x).
\end{equation*}
If $e \in X_1$, then $\kappa_\omega(e)$ is the usual Lin-Lu-Yau curvature of the edge $e$ with respect to $\omega$, see \eqref{lly}.

Let $(X,\delta,m)$ be a  1-dimensional cell complex. A {\it cycle} is an injective path $(v_0, \cdots, v_{n-1})$ of vertices $v_i\in X_0$ with $v_0 \sim v_{n-1}$ with $n\geq3$. Two cycles $(v_i)^{n-1}_{i=0}$ and $(w_i)^{n-1}_{i=0}$ are identified if $w_i = v_{k\pm i ~\mbox{\small{mod}}~ n}$ for some $k\in N$. 
Let $Y_2$ be the set of cycles and the non-negative function $m$ be the measure of $Y_2$. Set $X_2:=\{y\in Y_2: m(y)>0\}$ be the set of 2-cells.
For $x=(u,v)\in X_1$ and $v=(v_0, \cdots, v_{n-1})\in X_2,$ set
\[\delta x(v) =\left\{\begin{aligned}
   1,~~ &v=v_k,w=v_{k+1}\\
   -1,~~ &v=v_k,w=v_{k-1}\\
   0,~~ &\mbox{otherwise}.
\end{aligned}
  \right.\]
Then, $(X\cup X_2, \delta, m)$ is a 2-dimensional cell complex, and also a regular weighted CW complex. Through extending the 1-skeleton, optimizing the measures of 2-cells enables Lin-Lu-Yau and Forman curvatures to achieve consistency, as follows.
%将其转化成线性规划问题，其对偶问题正好与ollivier曲率一致
\begin{theorem}[see \cite{JM21}]\label{th:equi}
Let $G=(X,\delta,m)$ be a 1-dimensional cell complex and $\omega:X\rightarrow(0,\infty)$ non-degenerate. Then for any $e\in X_1$,
\begin{equation*}\label{forman-ollivier}
    \kappa_{\omega}(e)=\max_KF_{K,\omega}(e),
\end{equation*}
where the maximum is taken over all 2-dimensional cell complexes $K$ having $G$ as the 1-skeleton.
\end{theorem}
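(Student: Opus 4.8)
The plan is to establish the two inequalities $\kappa_\omega(e)\ge F_{K,\omega}(e)$ for every admissible $2$-dimensional complex $K$, and $\kappa_\omega(e)\le F_{K^*,\omega}(e)$ for a carefully constructed optimal $K^*$, the latter also furnishing attainment of the maximum. The common starting point is to translate the limit-free Lin-Lu-Yau formula into the language of $1$-forms. Writing $\phi=\delta f$ and using $\delta\delta f=0$, one has $\delta\delta^*\delta f=(\delta\delta^*+\delta^*\delta)\delta f=H\delta f$, so that
\[
\kappa_\omega(e)=\frac{1}{\omega(e)}\,\inf\{\,H\phi(e):\phi=\delta f,\ \phi(e)=\omega(e),\ |\phi|\le\omega\,\}.
\]
The decisive structural observation is that $\delta^*\delta\,\delta f=0$, so on \emph{exact} forms $\phi$ the value $H\phi(e)$ is independent of the $2$-cell measures; this encodes that the left-hand side is intrinsic to the $1$-skeleton while the right-hand side genuinely varies with $K$.

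For the lower bound I would expand, for an arbitrary admissible $1$-form $\phi$, the quantity $H_K\phi(e)=H_Ke(e)\,\phi(e)+\sum_{e'\ne e}H_Ke'(e)\,\phi(e')$, and apply the crude estimate $\sum_{e'\ne e}H_Ke'(e)\,\phi(e')\ge-\sum_{e'\ne e}|H_Ke'(e)|\,|\phi(e')|\ge-\sum_{e'\ne e}|H_Ke'(e)|\,\omega(e')$, using the constraints $\phi(e)=\omega(e)$ and $|\phi(e')|\le\omega(e')$. Dividing by $\omega(e)$ reproduces exactly the Forman expression, i.e.\ $H_K\phi(e)/\omega(e)\ge F_{K,\omega}(e)$ for every such $\phi$. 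Restricting now to exact forms and invoking $H_K=H$ on them, the infimum of the left side is $\kappa_\omega(e)$, whence $\kappa_\omega(e)\ge F_{K,\omega}(e)$ for each $K$, and therefore $\kappa_\omega(e)\ge\max_K F_{K,\omega}(e)$.

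The substance of the theorem is the reverse inequality, which I would organize as a minimax duality. The key reformulation is that the worst-case sign choice recovers the absolute values, so $F_{K,\omega}(e)=\inf_{\phi\in B}H_K\phi(e)/\omega(e)$ where the infimum is now over \emph{all} $1$-forms in $B:=\{\phi:\phi(e)=\omega(e),\,|\phi|\le\omega\}$. Consequently $\max_K F_{K,\omega}(e)=\sup_{m_3\ge0}\inf_{\phi\in B}\Phi(\phi,m_3)$ with $\Phi(\phi,m_3)=H_{m_3}\phi(e)/\omega(e)$ affine in $\phi$ and affine in $m_3$. Since $B$ is compact convex and the cone $\{m_3\ge0\}$ is convex, a minimax theorem interchanges the two operations. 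To identify the resulting $\inf_{\phi\in B}\sup_{m_3}\Phi$, note that the $m_3$-dependent part equals $\delta^*_{m_3}\delta\phi(e)=\frac{1}{m_2(e)}\sum_{z\succ e}m_3(z)\,\delta e(z)\,\delta\phi(z)$, so the inner supremum is $+\infty$ unless the circulation $\delta\phi$ around every $2$-cell through $e$ is sign-controlled; forcing finiteness collapses the problem essentially back onto exact forms and should return $\kappa_\omega(e)$.

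I expect the attainment and tightness step to be the main obstacle. The minimax interchange and weak duality only deliver inequalities; to conclude equality one must verify that the relaxation from exact forms to the circulation-constrained forms does not lower the infimum, and, equivalently, that an optimal $2$-cell measure $m_3^*\ge0$ genuinely exists making every off-diagonal term $H_{m_3^*}e'(e)\,\phi^*(e')$ equal to $-|H_{m_3^*}e'(e)|\,\omega(e')$ at the Lin-Lu-Yau minimizer $\phi^*=\delta f^*$. Concretely this means routing the optimal Kantorovich transport plan for $W(m_x^\epsilon,m_y^\epsilon)$ along cycles, assigning $2$-cell measures proportional to the transported mass, and checking by complementary slackness that the positive vertex-sharing (down-Laplacian) contributions are exactly cancelled or flipped by the negative face-sharing (up-Laplacian) contributions precisely where the potential $f^*$ is active. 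Proving that such an $m_3^*$ is always \emph{feasible} as a nonnegative measure, rather than merely a formal dual certificate, is the delicate point, and it is where the cell-complex compatibility conditions and the non-degeneracy of $\omega$ must be used.
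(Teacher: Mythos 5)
First, a point of reference: the paper does not prove this statement at all --- it is imported verbatim from \cite{JM21} --- so your proposal can only be measured against that reference's argument, not against anything in this paper. Your first inequality is correct and complete: for exact $\phi=\delta f$ the up-part $\delta^*\delta\phi$ vanishes, so $H_K\phi(e)$ is independent of the $2$-cells, and the box estimate using $\phi(e)=\omega(e)$, $|\phi(e')|\le\omega(e')$ gives $H_K\phi(e)/\omega(e)\ge F_{K,\omega}(e)$; taking the infimum over admissible exact forms yields $\kappa_\omega(e)\ge F_{K,\omega}(e)$ for every $K$. Your reformulation $F_{K,\omega}(e)=\inf_{\phi\in B}H_K\phi(e)/\omega(e)$ over the full box $B$ is also correct, and the minimax interchange is legitimate (Sion's theorem applies: $B$ is compact convex, the cone of measures $m_3\ge 0$ is convex, and $\Phi$ is affine in each variable).

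The genuine gap is that none of this yields the reverse inequality $\kappa_\omega(e)\le\max_K F_{K,\omega}(e)$, which is the entire content of the theorem. Carrying out the inner supremum gives
\[
\inf_{\phi\in B}\sup_{m_3\ge 0}\Phi(\phi,m_3)
=\inf\left\{\frac{\delta\delta^*\phi(e)}{\omega(e)}:\phi\in B,\ \delta e(z)\,\delta\phi(z)\le 0\ \text{for every cycle }z\text{ containing }e\right\},
\]
and since exact forms satisfy $\delta\phi=0$, they lie \emph{inside} this constraint set $C$; hence the chain of identities only produces $\max_K F_{K,\omega}(e)=\inf_C\le\inf_{\text{exact}}=\kappa_\omega(e)$ --- the same direction you already had, not its converse. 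What your phrase ``collapses the problem essentially back onto exact forms and should return $\kappa_\omega(e)$'' silently assumes is precisely $\inf_C\ge\inf_{\text{exact}}$, i.e.\ that relaxing exactness to the sign constraint on circulations does not lower the infimum. That claim is equivalent to the theorem itself and cannot follow from weak duality or from the minimax identity; it requires the constructive step you defer to your final paragraph: building an explicit $m_3^*$ from an optimal transport plan and verifying by complementary slackness that every absolute value in Forman's formula resolves with the correct sign at the Lin--Lu--Yau optimizer. That construction --- which is essentially how \cite{JM21} actually argue --- must also address points your sketch glosses over: transport paths concatenated with $e$ are closed walks, not necessarily injective cycles, so they must be decomposed into admissible elements of $Y_2$; the resulting $m_3^*$ must be shown nonnegative and compatible with the cell-complex axioms; and attainment of the \emph{maximum} (the statement asserts a max, not a sup) comes only from exhibiting this optimal $K^*$. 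As written, your argument establishes $\max_K F_{K,\omega}(e)\le\kappa_\omega(e)$ twice, and the equality not at all.
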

%For arbitrary dimension cell complex $x\in X$, Forman curvature serves as a lower bound for Ollivier curvature.
%这个结论在实验上是成立的，4-cycle
%\begin{corollary} 
%Let $G=(X,\delta,m)$ be a cell complex and $x\in X$. Then,  
%\[F_\omega (x) \leq \kappa_\omega (x).\]
%\end{corollary}

\begin{remark} \label{tree}
Let $G=(V,E,m_1,m_2)$ be a tree. Since there are no cycles in a tree, Lin-Lu-Yau curvature and Forman curvature coincide. That is, for any $e\in E$,
\begin{equation*}\label{kappa}
    \kappa_\omega (e) = F_\omega (e).
\end{equation*}
Indeed, the weighted Lin-Lu-Yau curvature for $e=(u,v)$ on a tree can be computed by \eqref{lly} as
\begin{equation}\label{1-dim}
  \kappa_{\omega}(e)=\frac{m_2(e)}{m_1(u)}+\frac{m_2(e)}{m_1(v)}-\sum_{e_u\sim u,e_u\neq e}\frac{m_2(e_u)}{m_1(u)}\frac{\omega(e_u)}{\omega(e)}-\sum_{e_v\sim v,e_v\neq e}\frac{m_2(e_v)}{m_1(v)}\frac{\omega(e_v)}{\omega(e)}.  
\end{equation}
If the set $\{e_u\in E: e_u\sim u,e_u\neq e\}$ is empty, then the corresponding summation is zero. This form exactly coincides with the weighted Forman curvature \eqref{forman weight} on general graphs.
\end{remark}

%\begin{remark}
%From the expression \eqref{1-dim}, $\omega>0$ can be solved by $\kappa_\omega$ as long as the matrix 
%\end{remark}

\section{The weighted Ricci flow on graphs}
In this section, we investigate the following weighted Ricci flow, for any $e\in E$,
\begin{equation}\label{flow-equation}
\left\{\begin{aligned}
    \frac{d}{dt}\omega(t,e)&=-R_\omega(t,e)\omega(t,e),~~ t>0,\\
    \omega(0,e)&=\omega_0(e)
\end{aligned}
  \right.
\end{equation}
with the initial value $\omega_0(e)>0$ for any $e\in E$, where the weighted Ricci curvature $R_\omega$ can be the weighted Lin-Lu-Yau curvature $\kappa_{\omega}$ (see \eqref{lly}), and the weighted Forman curvature without faces $F_{\omega}$(see \eqref{forman weight}). Define the normalized weight
\[\bar{\omega}(t,e)=\frac{\omega(t,e)}{\sum_{e\in E}\omega(t,e)}.\]
Observing that uniform scaling (amplification or reduction) of the weight does not alter these two types of curvature. Similar to \cite{bailin}, the normalized weight satisfies, for any $e\in E$,
\begin{equation}\label{flow-equation-nor}
\left\{\begin{aligned}
    \frac{d}{dt}\bar{\omega}(t,e)&=-R_\omega(t,e)\bar{\omega}(t,e)+\bar{\omega}(t,e)\sum_{h\in E}R_\omega(t,h)\bar{\omega}(t,h),~~ t>0,\\
    \bar{\omega}(0,e)&=\bar{\omega}_0(e).
\end{aligned}
  \right.
\end{equation}
In the evolution of the curvature flows, we assume that $\omega$ remains non-degenerate over time,  i.e., every edge $e = (x,y)\in E$ is the unique shortest path between its vertices; otherwise, it will trigger the edge deletion operation. That is, at time $t$, edges satisfying 
\[\omega_e\geq d_\omega(x,y)\]
are removed before proceeding with the subsequent evolution.

\subsection{The weighted Lin-Lu-Yau Ricci flow}
In this section, we investigate the existence and uniqueness of the solution to the weighted Lin-Lu-Yau flow by setting $R_\omega$ in \eqref{flow-equation} to be $\kappa_{\omega}$ (see \eqref{lly}). We present a new proof of the existence and uniqueness of the solution to the weighted Lin-Lu-Yau flow for graphs by examining the relationship between Lin-Lu-Yau curvature and Forman curvature (see \eqref{th:equi}), as well as utilizing the expression of the Forman curvature \eqref{Forman0}.

\begin{theorem}\label{main1}
Let $G=(V,E,m_1,m_2)$ be a graph. 
For the weighted Lin-Lu-Yau flow
\begin{equation}\label{ollivier}
    \frac{d}{dt}\omega(t,e)=-\kappa_\omega(t,e)\omega(t,e),
\end{equation}
there exists a unique positive solution for any  $t>0$ and $e \in E$ with a positive initial value $\omega_0$.
\end{theorem}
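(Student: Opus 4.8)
The plan is to reformulate the curvature flow \eqref{ollivier} as an autonomous ODE system on the finite-dimensional space of edge weights and then invoke the standard Picard--Lindelöf existence-uniqueness theorem, together with a barrier argument to guarantee positivity and global existence. Writing $\omega = (\omega(e))_{e \in E} \in \mathbb{R}^{|E|}_{+}$, the system \eqref{ollivier} takes the form $\frac{d}{dt}\omega = F(\omega)$, where $F_e(\omega) = -\kappa_\omega(e)\,\omega(e)$. The crucial observation, which I would place first, is that by Theorem~\ref{th:equi} the weighted Lin-Lu-Yau curvature $\kappa_\omega(e)$ equals a maximum of weighted Forman curvatures $F_{K,\omega}(e)$ over finitely many $2$-cell complexes $K$ sharing the $1$-skeleton $G$. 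Each $F_{K,\omega}(e)$, by the explicit formula \eqref{Forman0} (or \eqref{forman weight} in the graph case), is a finite sum of terms of the shape $\pm c \cdot \omega(e')/\omega(e)$ plus constants, hence a rational function of the weights that is smooth on the open positive orthant $\{\omega(e) > 0\}$.

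Next I would establish the local Lipschitz continuity of $F$. The building blocks $F_{K,\omega}(e)$ are each $C^\infty$ on the positive orthant; the only subtlety is that $\kappa_\omega(e) = \max_K F_{K,\omega}(e)$ is a maximum of smooth functions, which is merely locally Lipschitz (not differentiable at the switching locus). A finite maximum of locally Lipschitz functions is again locally Lipschitz, so $\kappa_\omega(e)$, and hence $F_e(\omega) = -\kappa_\omega(e)\,\omega(e)$, is locally Lipschitz on the open set $\{\omega(e) > 0 \ \forall e\}$. Picard--Lindelöf then yields a unique maximal solution on some interval $[0, T_{\max})$ emanating from the positive initial datum $\omega_0$.

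The remaining work is to upgrade local existence to global existence on $(0,\infty)$ while keeping the solution strictly positive. For positivity I would argue that along the flow each coordinate satisfies $\frac{d}{dt}\omega(e) = -\kappa_\omega(t,e)\,\omega(e)$, so formally $\omega(t,e) = \omega_0(e)\exp\!\big(-\int_0^t \kappa_\omega(s,e)\,ds\big)$; as long as $\kappa_\omega$ stays finite the exponential is strictly positive, so $\omega(e)$ cannot reach $0$ in finite time and the solution stays in the positive orthant where $F$ is defined. To rule out blow-up I would derive a two-sided a priori bound on $\kappa_\omega(t,e)$. From the explicit formula \eqref{forman weight} the curvature is bounded above by the purely geometric quantity $\tfrac{m_2(e)}{m_1(u)} + \tfrac{m_2(e)}{m_1(v)}$, which is a fixed constant $C_+$ independent of the weights (the subtracted sums are non-negative), giving $\kappa_\omega \le C_+$ and hence, by Grönwall, $\omega(t,e) \ge \omega_0(e)e^{-C_+ t} > 0$. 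For the lower bound on $\kappa_\omega$, one controls the ratios $\omega(e')/\omega(e)$ appearing in \eqref{forman weight}; combining this with the upper bound on each weight yields $\kappa_\omega \ge -C_-(t)$ for a locally bounded $C_-$, so $\omega(t,e)$ cannot escape to $+\infty$ in finite time either. Standard ODE continuation then forces $T_{\max} = \infty$.

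I expect the main obstacle to be the lower curvature bound used to preclude finite-time blow-up: since $\kappa_\omega(e)$ is a maximum over the $F_{K,\omega}(e)$ and each of these contains negative terms of the form $-\tfrac{m_2(e')}{m_1(u)}\tfrac{\omega(e')}{\omega(e)}$, one must show the ratios $\omega(e')/\omega(e)$ stay controlled along the flow. The cleanest route is to track $M(t) = \max_e \omega(t,e)$ and $m(t) = \min_e \omega(t,e)$ and derive differential inequalities for them directly from \eqref{ollivier}; because $\kappa_\omega$ is only locally Lipschitz, these would be interpreted in the sense of upper Dini derivatives, but the comparison still yields the locally finite bounds needed to close the continuation argument.
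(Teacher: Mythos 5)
Your overall architecture (invoke Theorem~\ref{th:equi}, prove local Lipschitz continuity, apply Picard--Lindel\"of, then use a priori bounds and a continuation argument) is the same as the paper's, but there is a genuine gap at the very first step and it propagates through the whole argument. The maximum in Theorem~\ref{th:equi} is \emph{not} over finitely many complexes: a $2$-dimensional cell complex $K$ with $1$-skeleton $G$ carries an arbitrary positive measure $m(f)$ on each $2$-cell, so the family is a continuum, and the coefficients $C_2(e')$ appearing in $F_{K,\omega}(e)$ contain the a priori unbounded terms $\sum_{f\succ e,e'} m(f)/m(e)$. Hence ``a finite maximum of locally Lipschitz functions is locally Lipschitz'' does not apply; a supremum over a non-compact parameter family of smooth functions need not admit any uniform Lipschitz constant or uniform bound without first controlling the parameters. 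This is exactly the step the paper supplies and your proposal omits: using the surgery (non-degeneracy) assumption, i.e.\ the triangle inequality $\omega(e)\le \sum_{e'\in\mathrm{I}_e,\, f\succ e,e'}\omega(e')$, the paper shows $F_{K,\omega}(e)$ is non-increasing in each $m(f)$ once $m(f)$ exceeds an explicit threshold $M$ depending only on $m_1,m_2$, so that $\max_K F_{K,\omega}(e)=\max_{K_M}F_{K_M,\omega}(e)$ with all $2$-cell measures bounded by $M$. Only after this reduction are $C_1,C_2$ uniformly bounded, which is what makes the supremum Lipschitz in $\omega$ and the curvature bounded.

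The same omission makes your key a priori estimate false as stated. You claim $\kappa_\omega(e)\le \frac{m_2(e)}{m_1(u)}+\frac{m_2(e)}{m_1(v)}$ ``from \eqref{forman weight}'', but \eqref{forman weight} is the Forman curvature of the complex with \emph{no} $2$-cells, and by Theorem~\ref{th:equi} it is a \emph{lower} bound for $\kappa_\omega(e)$, not an upper bound --- the inequality goes the wrong way. Concretely, on the triangle $K_3$ with $m_1=m_2\equiv 1$ and $\omega\equiv 1$ one computes $W(m_x^\epsilon,m_y^\epsilon)=1-3\epsilon$, hence $\kappa_\omega(e)=3$, exceeding your claimed constant $2$; the excess is precisely the $2$-cell contribution $m(f)/m(e)$ that \eqref{forman weight} ignores. (On trees your bound is correct, which is why Remark~\ref{tree} identifies the two curvatures there, but the theorem concerns general graphs.) The correct weight-independent upper bound is $\kappa_\omega(e)\le C$ with $C=\frac{c_2}{c_1}|X_0|+\frac{M}{c_3}|X_2|$, and it is available only after the bounded-$2$-cell-measure reduction above; the same reduction yields the lower bound $\kappa_\omega(e)\ge -C\sum_{e'}\omega(e')/\omega(e)$ needed to rule out finite-time blow-up. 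Once that reduction is in place, your positivity/Gr\"onwall/continuation scheme (including the Dini-derivative variant tracking $\max_e\omega$ and $\min_e\omega$) does close, essentially as in the paper.
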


\begin{proof} We will not distinguish between the measures on cells in notation and will uniformly use $m$.
Let $G=(X_0\cup X_1,\delta,m)$ be a 1-dimensional cell complex. Set $X_2:=\{f ~\mbox{is a cycle}: m(f)>0\}$ be the set of 2-cells. For a fixed  $Y\subset X_2$, let $K=(X_0\cup X_1\cup Y,\delta,m)$ be a 2-dimensional cell complex with $G$ as its 1-skeleton. The edges in the graph can be divided into three categories based on their relationship with a given edge \(e\in X_1\): those that belong to a common cycle with \(e\) (denoted by \(\mathrm{I}_e\)), those that share a vertex with \(e\) but do not lie on a common cycle (denoted by \(\mathrm{II}_e\)), and those that are disjoint from \(e\) (which do not appear in the computation in $F_{K,\omega}(e)$). Notice that for any \(e'\in\mathrm{II}_e\),
\[\left|\sum_{u\prec e,e'}\frac{m(e')}{m(u)}-\sum_{f\succ e,e'}\frac{m(f)}{m(e)}\right|=\sum_{u\prec e,e'}\frac{m(e')}{m(u)}.\]
And, for any \(e'\in\mathrm{I}_e\),  we have  when $m(f)$ for any $f\in Y$ is sufficiently large, 
\[\left|\sum_{u\prec e,e'}\frac{m(e')}{m(u)}-\sum_{f\succ e,e'}\frac{m(f)}{m(e)}\right|=\sum_{f\succ e,e'}\frac{m(f)}{m(e)}-\sum_{u\prec e,e'}\frac{m(e')}{m(u)},\]
it follows that for any $e=(u,v)$, 
\begin{equation*}
\begin{split}F_{K,\omega}(e)
&=\sum_{u\prec e}\frac{m(e)}{m(u)}-\sum_{e'\in \mathrm{II}_e}\frac{\omega(e')}{\omega(e)}\sum_{u\prec e,e'}\frac{m(e')}{m(u)}+\frac{\sum_{f\succ e}m(f)}{m(e)}-\sum_{e'\in\mathrm{I}_e}\frac{\omega(e')}{\omega(e)}\left(\sum_{f\succ e,e'}\frac{m(f)}{m(e)}-\sum_{u\prec e,e'}\frac{m(e')}{m(u)}\right)\\
&=A+\frac{1}{m(e)\omega(e)}\left(\sum_{f\succ e}\omega(e)m(f)-\sum_{e'\in\mathrm{I}_e}\omega(e')\sum_{f\succ e,e'}m(f)\right)\\
&=A+\frac{1}{m(e)\omega(e)}\sum_{i=1}^k\left(\omega(e)-\sum_{e'\in\mathrm{I}_e, f_i\succ e,e'}w(e')\right)m(f_i),
\end{split}
\end{equation*}
where $k$ is the number of 2-cells containing $e$, and  \[A:=\sum_{u\prec e}\frac{m(e)}{m(u)}+\sum_{e'\in \mathrm{I}_e}\frac{\omega(e')}{\omega(e)}\sum_{u\prec e,e'}\frac{m(e')}{m(u)}-\sum_{e'\in \mathrm{II}_e}\frac{\omega(e')}{\omega(e)}\sum_{u\prec e,e'}\frac{m(e')}{m(u)}\]
is independent of $f_i$ for any $i$. Combining the following triangle inequality under the surgery, for any $f_i$,
\[\omega(e)\leq \sum_{e'\in\mathrm{I}_e, f_i\succ e,e'}w(e'),\]
we have $F_{K,\omega}(e)$ is non-increasing with respect to $m(f_i)$ when 
\[m(f_i)>\max_{\{e'_i\in\mathrm{I}_e:e_i'\sim e;e_i',e\prec f_i\}}\frac{m(e_i')m(e)}{\min\{m(u),m(v)\}}.\]
Let $M=\max_{e'\in X_1}\frac{m(e')m(e)}{\min\{m(u),m(v)\}}$ and $K_M$ be $K$ with $m(f)\leq M$ for any $f\in Y$. We have 
\[\max_KF_{K,\omega}(e)=\max_{Y\subset X_2}\max_{f\in Y}F_{K,\omega}(e)=\max_{Y\subset X_2}\max_{f\in Y, m(f)\leq M}F_{K,\omega}(e)=\max_{K_M}F_{K_M,\omega}(e).\]

For a fixed $K_M$, by \eqref{Forman0}, we can simplify $F_{K_M,\omega}(e)$ as 

    \[ F_{K_M,w}(e) = C_1(e) - \sum_{e' \neq e} C_2(e')\frac{\omega(e')}{\omega(e)}, \]
where \( C_1(e) = \sum_{u\prec e}\frac{m(e)}{m(u)}+\sum_{f\succ e}\frac{m(f)}{m(e)} \) and \( C_2(e') = \left|\sum_{u\prec e,e'}\frac{m(e')}{m(u)}-\sum_{f\succ e,e'}\frac{m(f)}{m(e)}\right|\) are both non-negative constants independent of \( \omega \). Let $\min_{u\in V}m(u)=c_1$, $\max_{e\in E}m(e)=c_2$ and $\min_{e\in E}m(e)=c_3$. 
%Set $K^*=(X_0\cup X_1\cup X_2,\delta, m)$ with $m(u)=c_1, m(e)=c_2$ and  $m(f)=M$. 
Notice that $C_1(e),C_2(e)$ have a uniformly upper bound 
\[C:=\frac{c_2}{c_1}|X_0|+\frac{M}{c_3}|X_2|,\]
where $|X_i|$ represents the number of $i$-dimensional cells on $K$. 

Let $T>0$. First, we {\it claim} that  for any $t>0$, $\omega_e(t)$ is  bounded on $[0,T]$, and its lower bound is greater than zero, as follows:
\begin{equation}\label{bounds}
    \omega_e(0)e^{-CT}\leq \omega_e(t)  \leq  \left(\sum_{h \in E} \omega_h(0)\right)e^{C|X_1|T}
\end{equation} 
To prove this, we have for all \( e \in E \),
\[
-\frac{C\sum_{e'\in \mathrm{I}_e\cup \mathrm{II}_e}\omega_{e'}(t)}{\omega_e(t)} \leq \kappa_e(t) \leq C
\]
by the bounds of $F_{K_M,\omega}(e)$ and the equivalence between these two curvatures.
Thus 
\[
-C\omega_e(t)\leq -\kappa_e(t)\omega_e(t) \leq C\sum_{e\in E}\omega_{e}(t) . 
\]
On one hand, integrating the following inequality

\[
\omega'_e(t) = -\kappa_e(t)\omega_e(t) \geq -C\omega_e(t),
\] 
to get 
\( \omega_e(t) \geq \omega_e(0)e^{-CT} \)
on \(  [0, T]\). On the other hand, for any $t \in [0, T]$,
\[
\partial_t\sum_{e \in E} \omega_e(t) \leq C|X_1|\sum_{e \in E} \omega_e(t),
\]
which implies
\[
\omega_e(t) \leq \sum_{h \in E} \omega_h(t) \leq \left(\sum_{h \in E} \omega_h(0)\right)e^{C|X_1|T}.
\]
This completes the proof of the claim.

Next, we prove that $\kappa_{\omega}(e)$ is a $L$-Lipschitz function with respect to $\omega$ on $[0,T]$. From \eqref{bounds}, we may assume that there exists $\delta>1$ such that  for any $e\in X_1$ and all $i=1,2$,
\begin{equation*}\label{con}
    \delta^{-1}\leq w_i(e)\leq \delta.
\end{equation*}
And let 
\[\|\bm \omega_1-\bm \omega_2\|_{\infty}=\sup_{e\in X_1}|\omega_1(e)-\omega_2(e)|.\]
Then, 
\[\begin{split}
    |F_{K_M,\omega_1}(e)-F_{K_M,\omega_2}(e)|
    &\leq \sum_{e' \neq e}C_2(e') \left|\frac{\omega_1(e')}{\omega_1(e)}-\frac{\omega_2(e')}{\omega_2(e)}\right|\\
    &=\sum_{e' \neq e}C_2(e') \frac{\left|\omega_1(e')\omega_2(e)-\omega_2(e')\omega_1(e)\right|}{\omega_1(e)\omega_2(e)}\\
    &\leq\sum_{e' \neq e}C_2(e') \frac{\omega_1(e')|\omega_2(e)-\omega_1(e)|+\omega_1(e)|\omega_1(e')-\omega_2(e')|}{\omega_1(e)\omega_2(e)}\\
    &\leq L\|\bm \omega_1-\bm \omega_2\|_{\infty}
\end{split}\]
with $L=2\delta^3(|X_1|-1)C$, which completes the claim. From it, $\kappa_{\omega}(e)$ is a $L$-Lipschitz function with respect to $\omega$ by 
\[|\kappa_{\omega_1}(e)-\kappa_{\omega_2}(e)|=\left|\max_{K_M}F_{K_M,\omega_1}(e)-\max_{K_M}F_{K_M,\omega_2}(e)\right|\leq \max_{K_M}\left|F_{K_M,\omega_1}(e)-F_{K_M,\omega_2}(e)\right|\leq L\|\bm \omega_1-\bm \omega_2\|_{\infty}.\]
Moreover,
\[|\kappa_{\omega}(e)|=|\max_{K}F_{K,\omega}(e)|\leq C+\delta^2C.\]
By Theorem \ref{th:equi}, we obtain
\[|\kappa_{\omega_1}(e)\omega_1(e)-\kappa_{\omega_2}(e)\omega_2(e)|\leq |\kappa_{\omega_1}(e)||\omega_1-\omega_2|+\omega_2|\kappa_{\omega_1}(e)-\kappa_{\omega_2}(e)|\leq (C(1+\delta^2)+L)\|\bm \omega_1-\bm \omega_2\|_{\infty}.\]
Thus $\kappa_{\omega}\omega$ is locally Lipschitz  with respect to $\omega$ on $[0,T]$. According to the Picard-Lindelöf Theorem, there exists a unique solution to \eqref{ollivier} within $[0,T]$.

At last, we prove the long time existence of $\omega$. Define
\[
T^* = \sup\{T : \text{Equation \eqref{ollivier} has a unique solution on } [0, T]\}.
\]
Denote \( \phi(t) = \min\{\omega_{e_1}(t), \omega_{e_2}(t), \cdots, \omega_{e_n}(t)\} \) and $\Phi(t) = \max\{\omega_{e_1}(t), \omega_{e_2}(t), \cdots, \omega_{e_n}(t)\}$. Suppose that \( T^* < +\infty \),  we have either 
\begin{equation}\label{mini}
    \liminf_{t \to T^*} \phi(t) = 0
\end{equation}
or
\begin{equation}\label{max}
    \limsup_{t \to T^*} \Phi(t) = +\infty. 
\end{equation}
By the bounds \eqref{bounds} of $\omega$ on $[0,T]$, we obtain 
\[ \phi(t) \geq \phi(0)e^{-CT^*} \] 
on \( t \in [0, T^*)\), contradicting \eqref{mini}. 
And
\[
\Phi(t) \leq \sum_{e \in E} \omega_e(t) \leq \left(\sum_{e \in E} \omega_e(0)\right)e^{CnT^*},
\]
which contradicts \eqref{max}. Therefore \( T^* = +\infty \), which completes the proof.
\end{proof}
%\subsection{complete graph}
%tree
%complete 

\subsection{The weighted Forman Ricci flow on graphs}
In this section, We investigate the existence, uniqueness, and convergence of solutions to the weighted Forman-Ricci flow \eqref{flow-equation} on $G$ by setting $R_\omega$ to be the weighted Forman curvature $F_{\omega}$(see \eqref{forman weight}), which gets, for any $e\in E, t>0$
\begin{equation}\label{fm1}
  \frac{d}{dt}\omega(t,e)=-\left(\frac{m_2(e)}{m_1(u)}+\frac{m_2(e)}{m_1(v)}\right)\omega(t,e)+\sum_{e_u\sim u,e_u\neq e}\frac{m_2(e_u)}{m_1(u)}\omega(t,e_u)+\sum_{e_v\sim v,e_v\neq e}\frac{m_2(e_v)}{m_1(v)}\omega(t,e_v).  
\end{equation}
%This section also examines another type of Ricci flow, namely the weighted Forman curvature $F_{K,\omega}$(see \eqref{Forman0}) on 2-dimensional cell complexes $K=(X_0\cup X_1\cup Y,\delta,m)$ with $G$ as its 1-skeleton, where $Y\subset X_2$ is a set of cycles. For any $e\in E, t>0$
%\begin{equation}\label{fm2}
%     \frac{d}{dt}\omega(t,e)=-\left(\sum_{u\prec e}\frac{m_2(e)}{m_1(u)}+\sum_{f\succ e}\frac{m_3(f)}{m_2(e)}\right)w(t,e)+\sum_{e'\neq e}\left|\sum_{u\prec e,e'}\frac{m_2(e')}{m_1(u)}-\sum_{f\succ e,e'}\frac{m_3(f)}{m_2(e)}\right|\omega(t,e').
%\end{equation}
By Remark \ref{tree}, all discussions and conclusions in this section apply to Lin-Lu-Yau Ricci flow \eqref{ollivier} on a tree.
 
%\subsubsection{The weighted Forman Ricci flow on graphs}

 For notational simplicity, we introduce a matrix formulation of the Forman Ricci flow  in subsequent analysis. Let $E=\{e_1,e_2,\cdots,e_{n}\}$ and 
\[\mathbb{R}_+^{n}=\{\bm{\omega}=(\omega_1,\omega_2,\cdots,\omega_{n}),\omega_i>0, i=1,2,\cdots, n\}.\]
The weight vector is $\bm{\omega}:(0,\infty)\times E\rightarrow \mathbb{R}_+^{n}.$ The matrix form of the weighted Forman-Ricci flow \eqref{fm1} reads as
\begin{equation}\label{flow0}
\left\{\begin{aligned}
   \frac{d}{dt} \bm{\omega}&=\textbf{F}\bm{\omega},~~ t>0\\
    \bm{\omega}(0)&=\bm{\omega}_0.
\end{aligned}
  \right.
  \end{equation}
Let $e_i=(u_i,v_i)$, the matrix \textbf{F} satisfies, for any $i$,
\[\textbf{F}_{ii}=-\left(\frac{m_2(e_i)}{m_1(u_i)}+\frac{m_2(e_i)}{m_1(v_i)}\right),\]
and for $i\neq j$,
\[\textbf{F}_{ij}=\left\{\begin{aligned}
   &\frac{m_2(e_j)}{m_1(u_i)},& e_j\sim u_i\sim e_i,\\
   &\frac{m_2(e_j)}{m_1(v_i)},& e_j\sim v_i\sim e_i,\\
   &0,& \mbox{otherwise}.
\end{aligned}
  \right.\] 
%\begin{example}
 %   Set the measures $m_1(x)=d(x),m_2\equiv1.$  Then 
%    \[\mathbf{F}_{ij}=\]
%\end{example}

\begin{lemma}[Existence and Uniqueness for Linear Time-Invariant Systems]\label{th:ex}
Consider the linear time-invariant system:
\begin{equation*}\label{linear}
\dot{\bm x}(t) = \textbf{A} \bm x(t), \quad \bm x(0) = \bm x_0,
\end{equation*}
where \( \textbf{A} \in \mathbb{R}^{n\times n}\) is a constant matrix. For any given initial value vector \( \bm x_0 \in \mathbb{R}^n\), there exists a unique solution to this system defined for all time \( t \in (-\infty, \infty) \).
   Moreover, this unique solution is given by the matrix exponential:
    \[
      \bm x(t) = e^{t\bm A} \bm x_0:= \sum_{k=0}^{\infty} \frac{(t\bm A)^k}{k!}\bm x_0.  
    \]
\end{lemma}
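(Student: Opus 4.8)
The plan is to verify that the proposed formula $\bm x(t) = e^{t\bm A}\bm x_0$ is well defined, solves the system, and is the only solution, treating existence and uniqueness separately. The first step is to establish convergence of the matrix exponential. Fixing any submultiplicative matrix norm $\|\cdot\|$, I would bound the terms via $\left\|\frac{(t\bm A)^k}{k!}\right\| \leq \frac{(|t|\,\|\bm A\|)^k}{k!}$, so that the series is dominated termwise by the scalar series for $e^{|t|\,\|\bm A\|}$. By the Weierstrass $M$-test this gives absolute and locally uniform convergence of $t \mapsto e^{t\bm A}$ on all of $\mathbb{R}$, so the formula is defined for every $t \in (-\infty,\infty)$.

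Next I would show the formula solves the ODE. Since the series converges locally uniformly together with its termwise derivative, differentiation term by term is justified, giving
\[
\frac{d}{dt}e^{t\bm A} = \sum_{k=1}^{\infty} \frac{k\,t^{k-1}\bm A^k}{k!} = \bm A \sum_{k=0}^{\infty} \frac{t^k\bm A^k}{k!} = \bm A\, e^{t\bm A}.
\]
Applying this to $\bm x(t) = e^{t\bm A}\bm x_0$ yields $\dot{\bm x}(t) = \bm A \bm x(t)$, and evaluating at $t=0$ gives $e^{\bm 0}\bm x_0 = \bm x_0$, so existence for all time is immediate from the global definition of the formula.

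For uniqueness, the cleanest route is to suppose $\bm y(t)$ is any solution on an interval containing $0$ and to consider the auxiliary function $\bm z(t) = e^{-t\bm A}\bm y(t)$. Using that $\bm A$ commutes with every power of itself, hence with $e^{-t\bm A}$, the product rule gives $\dot{\bm z}(t) = -\bm A e^{-t\bm A}\bm y(t) + e^{-t\bm A}\bm A \bm y(t) = 0$, so $\bm z$ is constant, equal to $\bm z(0) = \bm y(0) = \bm x_0$; rearranging gives $\bm y(t) = e^{t\bm A}\bm x_0$. Alternatively, since the vector field $\bm x \mapsto \bm A \bm x$ is globally Lipschitz with constant $\|\bm A\|$, uniqueness follows directly from the Picard--Lindel\"of theorem already invoked in the proof of Theorem \ref{main1}. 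The only genuinely delicate points are the justification of term-by-term differentiation and the commutativity $\bm A e^{-t\bm A} = e^{-t\bm A}\bm A$ used in the uniqueness step; both are routine consequences of the absolute convergence established at the outset, so I expect no real obstacle here.
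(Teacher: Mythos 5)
Your proof is correct, but there is nothing in the paper to compare it against: the paper states this lemma without proof, treating it as the classical existence--uniqueness result for linear constant-coefficient systems, and immediately applies it in the proof of Theorem \ref{th:exist}. Your argument is the standard textbook one and fills the gap properly: norm convergence of the exponential series via a submultiplicative norm and the Weierstrass $M$-test, locally uniform convergence of the differentiated series to justify $\frac{d}{dt}e^{t\bm A}=\bm A e^{t\bm A}$, and uniqueness either by differentiating $\bm z(t)=e^{-t\bm A}\bm y(t)$ or by Picard--Lindel\"of (the field $\bm x\mapsto \bm A\bm x$ is globally Lipschitz, so either route is complete). The only step you pass over silently is the final rearrangement $\bm y(t)=e^{t\bm A}\bm x_0$, which needs $e^{-t\bm A}$ to be invertible with inverse $e^{t\bm A}$; this follows from $e^{t\bm A}e^{-t\bm A}=e^{\bm 0}=\bm I$, valid because $t\bm A$ and $-t\bm A$ commute, and is exactly as routine as the commutation facts you already flag, so it does not constitute a real gap.
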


\begin{theorem}\label{th:exist}
The weighted Forman Ricci flow \eqref{flow0} on general graphs has a unique positive solution \(\bm \omega(t) = e^{t\bm F} \bm \omega_0\) for all $t\in(0,\infty)$ with the positive initial value $\bm w_0>\bm 0$.
\end{theorem}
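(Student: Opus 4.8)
The plan is to separate the statement into two parts, existence-and-uniqueness and positivity, the first of which is immediate. Since the matrix $\textbf{F}$ defined in \eqref{flow0} is a constant real $n\times n$ matrix, Lemma \ref{th:ex} applies verbatim with $\textbf{A}=\textbf{F}$, yielding a unique solution $\bm\omega(t)=e^{t\textbf{F}}\bm\omega_0$ defined for all $t\in(-\infty,\infty)$, in particular on $(0,\infty)$. Thus the only substantive work is to show that this solution remains in the open positive cone $\mathbb{R}_+^{n}$ whenever $\bm\omega_0>\bm 0$.

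For positivity, the key structural observation is that all off-diagonal entries of $\textbf{F}$ are nonnegative: by the defining formulas, $\textbf{F}_{ij}\in\{m_2(e_j)/m_1(u_i),\,m_2(e_j)/m_1(v_i),\,0\}\geq 0$ for $i\neq j$, whereas the diagonal entries $\textbf{F}_{ii}=-(m_2(e_i)/m_1(u_i)+m_2(e_i)/m_1(v_i))$ are negative. Hence $\textbf{F}$ is a Metzler (essentially nonnegative) matrix, and such matrices generate entrywise-nonnegative exponentials. To exploit this, I would choose a scalar $c>0$ with $c\geq\max_i|\textbf{F}_{ii}|$, so that $\textbf{B}:=\textbf{F}+c\textbf{I}$ has all entries nonnegative. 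Since $\textbf{B}$ and $c\textbf{I}$ commute, the exponential factors as $e^{t\textbf{F}}=e^{-ct}e^{t\textbf{B}}$.

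Now, for $t>0$ every power $(t\textbf{B})^k$ is entrywise nonnegative, so the series $e^{t\textbf{B}}=\sum_{k\geq 0}(t\textbf{B})^k/k!$ is entrywise nonnegative; moreover the $k=0$ term contributes the identity, forcing $(e^{t\textbf{B}})_{ii}\geq 1$ for every $i$. Multiplying by the positive scalar $e^{-ct}$ preserves all signs, so $e^{t\textbf{F}}$ is entrywise nonnegative with strictly positive diagonal. Consequently, for each component,
\[
\omega_i(t)=\sum_{j}(e^{t\textbf{F}})_{ij}(\omega_0)_j\geq (e^{t\textbf{F}})_{ii}(\omega_0)_i\geq e^{-ct}(\omega_0)_i>0,
\]
which gives $\bm\omega(t)>\bm 0$ for all $t>0$ and completes the argument.

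The proof becomes essentially routine once the Metzler structure is noticed, so I do not anticipate a serious obstacle; the one point requiring care is obtaining strict rather than merely weak positivity, which is exactly why I retain the $k=0$ term to lower-bound the diagonal of $e^{t\textbf{B}}$. An alternative, more analytic route would be a first-exit-time comparison: if $t_0$ were the earliest time at which some component vanished, then at $t_0$ that component would have nonnegative derivative, since every off-diagonal coupling term is nonnegative and the remaining components are still nonnegative. Turning this observation into a clean contradiction, however, requires the same scalar shift to rule out the degenerate case of a vanishing derivative, so the Metzler-matrix computation above is the most economical path.
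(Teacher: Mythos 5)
Your proposal is correct and follows essentially the same route as the paper: both apply Lemma \ref{th:ex} for existence and uniqueness, then obtain positivity by shifting $\textbf{F}$ by a scalar multiple of the identity to get a nonnegative matrix $\textbf{B}$, factoring $e^{t\textbf{F}}=e^{-ct}e^{t\textbf{B}}$, and using the entrywise nonnegativity of $e^{t\textbf{B}}$ together with a strictly positive diagonal. Your use of the $k=0$ term to bound $(e^{t\textbf{B}})_{ii}\geq 1$ is a slightly cleaner way to secure the strict diagonal positivity than the paper's choice of $\alpha>\max_i|\textbf{F}_{ii}|$, but the argument is the same in substance.
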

\begin{proof}
From Lemma \ref{th:ex}, the weighted Forman Ricci flow \eqref{flow0} has a unique  solution for all $t\in(0,\infty)$ with
\(\bm \omega(t) = e^{t\textbf{F}} \bm \omega_0,\)
for a given $\bm w_0>\bm 0$.

Next, we investigate the positivity of $\bm \omega(t)$ for any $t>0$. Since the off-diagonal elements of \( \textbf{F}\)  are non-negative. We can choose  \(\alpha >\max_i |\textbf{F}_{ii}|\) to construct the non-negative matrix 
\[ \textbf{B} = \textbf{F} + \alpha \bm I \]
with \( \textbf{B}_{ii} = \textbf{F}_{ii} + \alpha> 0 \), and 
the off-diagonal elements of \( \textbf{B} \) are \( \textbf{F}_{ij} \geq 0 \) for any $i\neq j$. Therefore,
    \[
    e^{t\textbf{F}} = e^{t(\textbf{B} - \alpha \textbf{I})} 
           = e^{-\alpha t} e^{t \textbf{B}}.
    \]
As each term \( \frac{(t\textbf{B})^k}{k!} \) in the series is non-negative and $\left( \frac{(t\textbf{B})^k}{k!} \right)_{ii}>0$ for any $i$, then $e^{t\textbf{B}}\geq \bm 0$ and $(e^{t\textbf{B}})_{ii}>0$ for any $i$. Therefore, with $\bm w_0>\bm 0$, 
\[\bm \omega(t) = e^{t\textbf{F}} \bm \omega_0= e^{-\alpha t} e^{t \textbf{B}}\bm \omega_0>\bm 0.\]
\end{proof}

%证明F有界，因此可积

Observing that the coefficient matrix $\mathbf{F}$ is symmetric if and only if $m_2(e)$ is a constant with respect to $e\in E$.
To overcome the asymmetry of  $\mathbf{F}$ in the system \eqref{flow0} for a general measure $m_2$, we let 
\begin{equation}\label{relation}
\bm{\tilde{\omega}}(t)=\mathbf{M} \bm\omega(t),\quad t\in[0,\infty),
\end{equation}
and $\mathbf{M}:=\operatorname{diag}(\sqrt{m_2(e_1)},\sqrt{m_2(e_2)},\cdots,\sqrt{m_2(e_n)})$.
Rewrite the Forman curvature flow \eqref{flow0} to get 
%\[\frac{d}{dt}\tilde{\omega}(t,e)=-\left(\frac{m_2(e)}{m_1(u)}+\frac{m_2(e)}{m_1(v)}\right)\tilde{\omega}(t,e)+\sum_{e_u\sim u,e_u\neq e}\frac{\sqrt{m_2(e_u)m_2(e)}}{m_1(u)}\tilde{\omega}(t,e_u)+\sum_{e_v\sim v,e_v\neq e}\frac{\sqrt{m_2(e_v)m_2(e)}}{m_1(v)}\tilde{\omega}(t,e_v),\]
%thus we obtain the corresponding Ricci flow
\begin{equation}\label{flow1}
\left\{\begin{aligned}
   \frac{d}{dt} \bm{\tilde{\omega}}(t)&=\textbf{\~{F}}\bm{\tilde{\omega}}(t),~~ t>0\\
    \bm{\tilde{\omega}}(0)&=\bm{\tilde{\omega}}_0,
\end{aligned}
  \right.
  \end{equation}
 where \(\textbf{\~{F}}=\mathbf{M} \mathbf{F}\mathbf{M}^{-1}.\)
 The values on the main diagonal of \textbf{\~{F}} remain unchanged compared to those in \textbf{F}, however, let $e_i=(u_i,v_i)$, for $i\neq j$,
\[\textbf{\~F}(e_i,e_j)=\left\{\begin{aligned}
   &\frac{\sqrt{m_2(e_i)m_2(e_j)}}{m_1(u_i)},& e_j\sim u_i\sim e_i,\\
   &\frac{\sqrt{m_2(e_i)m_2(e_j)}}{m_1(v_i)},& e_j\sim v_i\sim e_i,\\
   &0,&\mbox{otherwise.}
\end{aligned}
  \right.\] 
Notice that \textbf{\~{F}} is real  symmetric, we can diagonalize  $\textbf{\~{F}} = \mathbf{P} \mathbf{D} \mathbf{P}^{T}$ where $\mathbf{D} = \operatorname{diag}(\lambda_1, \dots,\lambda_{n})$ with the eigenvalues
\[\lambda_1\leq \lambda_2\leq \cdots\leq \lambda_n,\]
and $\mathbf{P}=(\mathbf{p_1}~\mathbf{p_2}~\cdots~\mathbf{p_{n}})$ with the eigenvector $\mathbf{p_i}$ corresponding to $\lambda_i$. Sort the edges according to the magnitudes of their corresponding eigenvalues, i.e.
\[E = \{ e_1, \dots, e_n \} \text{ such that } \lambda_{e_i}\leq \lambda_{e_j},\ \forall \, i < j.\]
Let $p_{il}=\mathbf{p_i}(e_l).$
The solution to \eqref{flow1} can be rewritten as
\[
\bm{\tilde{\omega}}(t) =e^{t\textbf{\~{F}}}\bm{\tilde{\omega}}_0= \mathbf{P} \operatorname{diag}\left(e^{\lambda_1t}, \dots, e^{\lambda_{n}t}\right) \mathbf{P}^{-1} \bm{\tilde{\omega}}_0.
\]
Due to \eqref{relation}, it follows that
\[
\bm{\omega}(t) = (\mathbf{M^{-1}P})\operatorname{diag}\left(e^{\lambda_1t}, \dots, e^{\lambda_{n}t}\right) (\mathbf{M^{-1}P})^{-1} \bm{\omega}_0.
\]
Therefore, for any $e_l\in E,$
the solution decomposes as
\begin{equation}\label{solu}
    \omega(t,e_l) = \sum_{i=1}^{n}c_{i}(e_l) \exp\{\lambda_i t\},
\end{equation}
where $c_{i}(e_l):=\frac{1}{\sqrt{m_2(e_{l})}}p_{il}\sum_{j=1}^{n}p_{ij}\omega_{oj}\sqrt{m_2(e_j)}.$

By the positivity of $\omega(t,e)$ for any $e\in E$ and any $t>0$ (see Theorem \ref{th:exist}), the normalized weight can be expressed as
\begin{equation*}
   \bar{\omega}(t,e_l) = \frac{\sum_{i=1}^{n}c_{i}(e_l) \exp\{\lambda_{i} t\}}{\sum_{i=1}^{n}\bar{c_i}\exp\{\lambda_{i} t\}},
\end{equation*}  
where $\bar{c_i}:=\sum_{e\in E}c_i(e)$.

\begin{lemma}[The Perron-Frobenius Theorem]

Let \(\bm A\) be an \(n \times n\) non-negative and irreducible matrix. Then, the spectral radius \(\rho(\bm A):= \max \{ |\lambda| : \lambda \text{ is eigenvalue of } \bm A \}\) is a simple eigenvalue and \(\rho(\bm A) > 0\). Furthermore, the right eigenvector \(\mathbf{v}\) for \(\rho(\bm A)\) satisfies \(\mathbf{v} > 0\).

\end{lemma}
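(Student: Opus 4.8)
The plan is to establish the three assertions—strict positivity of the spectral radius, strict positivity of the associated eigenvector, and algebraic simplicity—in that order, using throughout the standard consequence of irreducibility that $(\bm I+\bm A)^{n-1}>\bm 0$ entrywise. I would first record why $\rho(\bm A)>0$: irreducibility means the digraph of $\bm A$ is strongly connected, so for $n\geq 2$ some vertex lies on a closed walk, giving $(\bm A^{k})_{ii}>0$ and hence $\operatorname{tr}(\bm A^{k})>0$ for some $k$; this rules out nilpotency, so not all eigenvalues vanish and $\rho(\bm A)>0$.

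Next I would produce an eigenvector with a positive eigenvalue by a fixed-point argument. The map $g(\bm x)=(\bm I+\bm A)\bm x/\|(\bm I+\bm A)\bm x\|_1$ sends the standard simplex $\Delta=\{\bm x\geq\bm 0:\sum_i x_i=1\}$ continuously into itself (the denominator is bounded below by $1$ on $\Delta$ since $(\bm I+\bm A)\bm x\geq\bm x$ componentwise), so Brouwer's fixed-point theorem yields $\bm v\in\Delta$ with $(\bm I+\bm A)\bm v=\lambda\bm v$, i.e.\ $\bm A\bm v=(\lambda-1)\bm v$ with $\lambda>0$. Applying $(\bm I+\bm A)^{n-1}>\bm 0$ to $\bm v\geq\bm 0$, $\bm v\neq\bm 0$ gives $\lambda^{n-1}\bm v=(\bm I+\bm A)^{n-1}\bm v>\bm 0$, so in fact $\bm v>\bm 0$. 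The same device shows that \emph{every} nonnegative $r$-eigenvector is strictly positive, a fact I reuse below.

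I would then identify $r:=\lambda-1$ with $\rho(\bm A)$ and deduce geometric simplicity. Running the construction on $\bm A^{T}$ gives a strictly positive left eigenvector $\bm u^{T}$ with $\bm u^{T}\bm A=s\,\bm u^{T}$, and evaluating $\bm u^{T}\bm A\bm v$ two ways forces $r=s$. For an arbitrary eigenpair $\bm A\bm w=\mu\bm w$, the entrywise bound $|\mu|\,|\bm w|\leq\bm A|\bm w|$ (valid since $\bm A\geq\bm 0$) tested against $\bm u^{T}>\bm 0$ yields $|\mu|\,\bm u^{T}|\bm w|\leq\bm u^{T}\bm A|\bm w|=r\,\bm u^{T}|\bm w|$, and $\bm u^{T}|\bm w|>0$ gives $|\mu|\leq r$; hence $r=\rho(\bm A)$. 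If $\bm v,\bm v'$ were two independent $r$-eigenvectors, then $\bm v-c\bm v'$ with $c=\min_i v_i/v'_i$ would be a nonnegative $r$-eigenvector with a vanishing coordinate, hence $\bm 0$ by the positivity fact, so the eigenspace is one-dimensional.

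The main obstacle is upgrading geometric to algebraic simplicity, and here I would differentiate the characteristic polynomial via Jacobi's formula $\frac{d}{d\lambda}\det(\lambda\bm I-\bm A)=\operatorname{tr}\bigl(\operatorname{adj}(\lambda\bm I-\bm A)\bigr)$ and show the right side is nonzero at $\lambda=r$. Since $r$ is geometrically simple, $r\bm I-\bm A$ has rank $n-1$, so its adjugate has rank exactly one; the identities $(r\bm I-\bm A)\operatorname{adj}(r\bm I-\bm A)=\bm 0$ and its transpose force every column of the adjugate to be a multiple of $\bm v$ and every row a multiple of $\bm u^{T}$, whence $\operatorname{adj}(r\bm I-\bm A)=\gamma\,\bm v\bm u^{T}$ with $\gamma\neq 0$. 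Its trace is $\gamma\,\bm u^{T}\bm v$, which is nonzero because $\bm u,\bm v>\bm 0$, so $r$ is a simple root of the characteristic polynomial. The delicate point to get right is the rank count and the reduction of the adjugate to the rank-one form $\gamma\,\bm v\bm u^{T}$; the remaining steps are routine.
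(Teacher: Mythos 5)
The paper gives no proof of this lemma at all: it is quoted as the classical Perron--Frobenius theorem and then used as a black box in Proposition 3.2, Proposition 3.3 and the convergence results that follow. So there is no paper argument to compare against, and your proposal should be judged on its own. On its own it is a correct, essentially standard textbook proof: non-nilpotency via a closed walk gives $\rho(\bm A)>0$; Brouwer's fixed-point theorem on the simplex produces a nonnegative eigenvector; $(\bm I+\bm A)^{n-1}>\bm 0$ upgrades nonnegativity to strict positivity; testing against a positive left eigenvector of $\bm A^{T}$ identifies the eigenvalue with $\rho(\bm A)$ and gives geometric simplicity; and Jacobi's formula together with the rank-one identity $\operatorname{adj}(r\bm I-\bm A)=\gamma\,\bm v\bm u^{T}$, $\gamma\neq 0$, yields algebraic simplicity. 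That last step is the only non-routine part and you handle it correctly.

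One repair is needed in the geometric-simplicity step. The scalar $c=\min_i v_i/v'_i$ is not the right choice when $\bm v'$ has entries of both signs (or zero entries): for $\bm v=(1,1,1)^{T}$ and $\bm v'=(1,-1,-2)^{T}$ one gets $c=-1$ and $\bm v-c\bm v'=(2,0,-1)^{T}$, which is not nonnegative, so your positivity fact cannot be applied to it. The correct choice is $c=\min\{v_i/v'_i:\,v'_i>0\}$, after replacing $\bm v'$ by $-\bm v'$ in case $\bm v'$ has no positive entry (in that case $-\bm v'\geq\bm 0$ is itself a nonnegative eigenvector, hence strictly positive by your own lemma, so the formula applies). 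With this $c$, the vector $\bm v-c\bm v'$ is nonnegative, is an $r$-eigenvector, and has a vanishing coordinate, so it must be $\bm 0$, which is the contradiction you want. You should also record the harmless reduction that, since $r$ is real, the $r$-eigenspace has a real basis, so it suffices to consider real $\bm v'$. With these two sentences added, the proof is complete.
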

\begin{remark}\label{irreducible}
The irreducibility of a matrix is equivalent to: The associated digraph is strongly connected. Let \( \bm A \) be an \( n \times n \) matrix and  its associated directed graph \( G_{\bm A} \) satisfies:

\begin{itemize}
    \item The vertex set is \( \{1, 2, \ldots, n\} \),
    \item For all \( i \neq j \), there exists a directed edge \( j \to i \) if and only if  
    \(\bm A_{ij} \neq 0.\)
\end{itemize}
\end{remark}
\begin{proposition}\label{irreducibleF}
The biggest eigenvalue of  \textbf{\~{F}} is algebraically simple, and the corresponding eigenvector $\bm p_n$ is positive.
\end{proposition}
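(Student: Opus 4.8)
The plan is to deduce the statement from the Perron--Frobenius theorem after a spectral shift, mirroring the device already used in the proof of Theorem~\ref{th:exist}. Recall that $\textbf{\~{F}}$ is real symmetric with negative diagonal entries and off-diagonal entries $\textbf{\~F}(e_i,e_j)=\sqrt{m_2(e_i)m_2(e_j)}/m_1(\cdot)>0$ exactly when $e_i$ and $e_j$ share a common vertex, and $0$ otherwise. First I would fix a constant $\alpha$ large enough that both $\alpha>\max_i|\textbf{\~F}(e_i,e_i)|$ and $\alpha>-\lambda_1$, and set $\textbf{B}=\textbf{\~{F}}+\alpha\mathbf{I}$. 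Then $\textbf{B}$ is a non-negative symmetric matrix, its eigenvalues are precisely $\lambda_i+\alpha>0$, and consequently its spectral radius equals the largest eigenvalue, $\rho(\textbf{B})=\lambda_n+\alpha$.

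The crucial step is to show that $\textbf{B}$ is irreducible, and I expect this to be the main obstacle since the remainder is routine once it is in place. I would invoke the digraph criterion of Remark~\ref{irreducible}: because $\textbf{B}$ and $\textbf{\~{F}}$ share the same off-diagonal support, the associated digraph $G_{\textbf{B}}$ carries a pair of opposite arcs between $e_i$ and $e_j$ precisely when these edges meet at a vertex of $G$, so $G_{\textbf{B}}$ is the symmetric orientation of the line graph $L(G)$. The substance of the argument is then the elementary graph fact that $L(G)$ is connected whenever $G$ is: given two edges $e,f$, a path in $G$ joining an endpoint of $e$ to an endpoint of $f$ induces a walk in $L(G)$ from $e$ to $f$. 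Since $G_{\textbf{B}}$ is symmetric, connectedness coincides with strong connectedness, which yields irreducibility of $\textbf{B}$; the degenerate case of a single edge gives a $1\times1$ matrix and is trivial.

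Finally I would apply the Perron--Frobenius theorem to the non-negative irreducible matrix $\textbf{B}$ to conclude that $\rho(\textbf{B})=\lambda_n+\alpha$ is an algebraically simple eigenvalue admitting a strictly positive eigenvector $\mathbf{v}>0$. Transferring back through $\textbf{B}=\textbf{\~{F}}+\alpha\mathbf{I}$, which leaves eigenvectors unchanged and shifts eigenvalues by the constant $\alpha$, shows that $\lambda_n$ is an algebraically simple eigenvalue of $\textbf{\~{F}}$ and that its eigenvector may be taken to be $\bm p_n=\mathbf{v}>0$, completing the proof.
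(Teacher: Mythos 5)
Your proposal is correct and follows essentially the same route as the paper's own proof: shift $\textbf{\~{F}}$ by a multiple of the identity to obtain a non-negative symmetric matrix, establish irreducibility by identifying the associated graph with the line graph $L(G)$ (connected since $G$ is connected), apply the Perron--Frobenius theorem, and transfer simplicity and positivity of the eigenvector back through the shift. The only differences are cosmetic: you impose the extra condition $\alpha>-\lambda_1$ (harmless but unnecessary, since Perron--Frobenius already forces the spectral radius to coincide with the largest eigenvalue of a symmetric non-negative matrix), and you verify irreducibility of the shifted matrix directly rather than first proving it for $\textbf{\~{F}}$.
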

\begin{proof}
First, we claim that \textbf{\~{F}} is irreducible. Indeed, 
since \textbf{\~{F}} is symmetric, then if \( j \to i \) then \( i \to j \) on its associated directed graph \( G_{\textbf{\~{K}}} \), which can be treated as an undirected graph. From it, the associated undirected graph of \textbf{\~{F}} is exactly the line graph of $G$, which is connected due to the connectivity of $G$. By Remark \ref{irreducible}, we obtain the irreducibility of \textbf{\~{F}}.

Next, we construct a non-negative matrix from \textbf{\~{F}}. Let   \(\beta =\max_i |\bm F_{ii}|\), we construct 
\[ \textbf{\~{B}} = \textbf{\~{F}} + \beta \bm I.\]
Similar to the proof of Theorem \ref{th:exist}, we obtain that \textbf{\~{B}} is non-negative with
\[\textbf{\~{B}}_{ii}=\bm F_{ii}+\max_i |\bm F_{ii}|\geq 0 ~\mbox{and}~ \textbf{\~{B}}_{ij}=\textbf{\~{F}}_{ij}\geq0 ~\mbox{for any}~i\neq j.\]
Moreover, due to the irreducibility of \textbf{\~{F}} and $\textbf{\~{B}}_{ij}=\textbf{\~{F}}_{ij} ~\mbox{for any}~i\neq j$, we obtain \textbf{\~{B}} is irreducible. From the Perron-Frobenius Theorem, the biggest eigenvalue of \textbf{\~{B}} is positive and algebraically simple, which is denoted by $\lambda_{\max}(\textbf{\~{B}})$ and 
\[\lambda_{\max}(\textbf{\~{F}})=\lambda_{\max}(\textbf{\~{B}})-\beta.\]
Notice that \textbf{\~{B}} and \textbf{\~{F}} have the same eigenvectors. Therefore,
the right eigenvector \(\mathbf{p}_n\) corresponding to  $\lambda_{\max}(\textbf{\~{B}})$ and then to $\lambda_{\max}(\textbf{\~{F}})$  is positive.
\end{proof}
%\textbf{Another proof of the positivity of $\bm \omega(t) $ for any $t>0$.}

Recalling the expressions of Forman curvature \eqref{forman weight} on a general graph or Lin-Lu-Yau curvature \eqref{1-dim} on a tree, it is natural to  consider the inverse problem; that is, given the curvature vector $(\kappa_1\cdots,\kappa_n)^T$, called targeted curvature vector, to calculate the weight vector $(\omega_1\cdots,\omega_n)^T$. Similarly,  we convert the equation \eqref{forman weight}  into matrix form and then multiply it on the left by $\bm M$  to get 
\begin{equation}\label{inverse}
    (\textbf{\~{F}}+\operatorname{\bm{diag}}(\kappa_1\cdots,\kappa_n))\bm{\tilde{\omega}}=\bm 0.
\end{equation}
Based on the properties of \textbf{\~{F}}, we obtain the necessary and sufficient condition for  the linear system \eqref{inverse}
to have a positive solution.
\begin{proposition}
The  linear system \eqref{inverse} has a positive solution if and only if  the largest eigenvalue of the matrix \(\textbf{K}:=\textbf{\~{F}}+\operatorname{\bm{diag}}(\kappa_1\cdots,\kappa_n)\) is 0.
\end{proposition}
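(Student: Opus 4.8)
The plan is to exploit that $\textbf{K} := \textbf{\~{F}} + \operatorname{diag}(\kappa_1,\dots,\kappa_n)$ inherits the two structural features of $\textbf{\~{F}}$ established in Proposition \ref{irreducibleF}. Since adding a diagonal matrix changes no off-diagonal entry, $\textbf{K}$ is again real symmetric, it has the same non-negative off-diagonal entries as $\textbf{\~{F}}$, and it has the same off-diagonal support, so its associated (undirected) graph coincides with that of $\textbf{\~{F}}$ and is therefore connected; hence $\textbf{K}$ is also irreducible. Observe that requiring \eqref{inverse} to possess a positive solution is exactly requiring $0$ to be an eigenvalue of $\textbf{K}$ that admits a strictly positive eigenvector. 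Thus the proposition reduces to the claim that, for such a matrix, the only eigenvalue possessing a positive eigenvector is the largest one.

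First I would reduce to the Perron--Frobenius setting by the same shift as in Proposition \ref{irreducibleF}: choosing $\gamma > \max_i |\textbf{K}_{ii}|$ makes $\textbf{B} := \textbf{K} + \gamma \bm I$ entrywise non-negative, and $\textbf{B}$ remains irreducible because its off-diagonal pattern coincides with that of $\textbf{K}$. The Perron--Frobenius Theorem then yields that $\rho(\textbf{B}) = \lambda_{\max}(\textbf{B})$ is a simple eigenvalue with a strictly positive eigenvector $\bm p$. Because $\textbf{B}$ and $\textbf{K}$ share eigenvectors and $\lambda_{\max}(\textbf{K}) = \lambda_{\max}(\textbf{B}) - \gamma$, this $\bm p > 0$ is precisely the eigenvector of $\textbf{K}$ associated with its largest eigenvalue $\lambda_{\max}(\textbf{K})$.

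For the sufficiency direction, suppose $\lambda_{\max}(\textbf{K}) = 0$. Then the positive Perron eigenvector satisfies $\textbf{K}\bm p = \bm 0$, so $\bm{\tilde{\omega}} = \bm p$ is a positive solution of \eqref{inverse}. For necessity, suppose \eqref{inverse} admits a positive solution $\bm{\tilde{\omega}} > \bm 0$; then $\bm{\tilde{\omega}}$ is an eigenvector of the symmetric matrix $\textbf{K}$ for the eigenvalue $0$. The key step, and the only genuine obstacle, is to show that this forces $0 = \lambda_{\max}(\textbf{K})$ rather than some smaller eigenvalue. Here I would invoke the symmetry of $\textbf{K}$: eigenvectors belonging to distinct eigenvalues are orthogonal. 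If $0 \neq \lambda_{\max}(\textbf{K})$, then $\bm{\tilde{\omega}}$ and the Perron eigenvector $\bm p$ lie in different eigenspaces, whence $\bm p^{T}\bm{\tilde{\omega}} = 0$; but $\bm p > \bm 0$ and $\bm{\tilde{\omega}} > \bm 0$ force $\bm p^{T}\bm{\tilde{\omega}} > 0$, a contradiction. Therefore $0 = \lambda_{\max}(\textbf{K})$, which establishes the equivalence. (Equivalently, one could argue via the positive left Perron eigenvector of $\textbf{B}$, but the orthogonality argument is cleanest given that $\textbf{K}$ is symmetric.)
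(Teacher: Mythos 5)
Your proof is correct and follows essentially the same route as the paper: both arguments shift $\textbf{K}$ by a multiple of the identity to obtain a non-negative, symmetric, irreducible matrix and then apply the Perron--Frobenius Theorem, identifying a positive solution of \eqref{inverse} with a positive eigenvector of $\textbf{K}$ for the eigenvalue $0$. The only differences are cosmetic: you choose the shift $\gamma > \max_i |\textbf{K}_{ii}|$ once and for all (avoiding the paper's preliminary step of deducing $\textbf{K}_{ii}<0$ from the positive solution), and you make explicit, via orthogonality of eigenspaces of a symmetric matrix, the standard Perron--Frobenius corollary that the paper simply cites, namely that only the largest eigenvalue can admit a positive eigenvector.
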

\begin{proof}
Assume \(\bm{\omega} > \mathbf{0}\) satisfies \( \textbf{K}\bm \omega = \mathbf{0} \). For any row \( i \): 
    \[
        \textbf{K}_{ii} \omega_i + \sum_{j \neq i} \textbf{K}_{ij} \omega_j = 0.
    \]
    As \( \omega_j > 0 \) and \( \textbf{K}_{ij} \geq 0 \) for \( j \neq i \), \(\sum_{j \neq i} \textbf{K}_{ij} \omega_j>0\) for the connectedness of $G$. Thus \( \textbf{K}_{ii} < 0 \) since \( \omega_i > 0 \). Let \( k =\max_i \{-\textbf{K}_{ii}\}>0\).
    Then \( k\textbf{I} +\textbf{K} \) is non-negative, symmetric, and irreducible. And,
    \[
        \lambda_{\max}(k\textbf{I} +\textbf{K}) =  k + \lambda_{\max}(\textbf{K}).
    \]
    Furthermore, \( \textbf{K}\bm \omega = \mathbf{0} \) implies \( (k\textbf{I} +\textbf{K})\bm \omega = k\bm \omega \). As \(\bm \omega> \mathbf{0}\), the Perron-Frobenius Theorem for \( k\textbf{I} +\textbf{K}\) implies that there exists no positive eigenvector corresponding to eigenvalues except $\lambda_{\max}(k\textbf{I} +\textbf{K})$, i.e., \( k = \lambda_{\max}(k\textbf{I}+\textbf{K}) \). 
    Thus, \(\lambda_{\max}(\textbf{K}) = 0 \).

On the other hand, set $l>\max_i \{-\textbf{K}_{ii}\}$.
Then \( l\textbf{I} +\textbf{K}\) is also non-negative, symmetric, and irreducible. 
By the Perron-Frobenius Theorem,  the eigenvector $\bm x$ corresponding to \( \lambda_{\max}(l\textbf{I} +\textbf{K}) \) is positive, i.e., \(\bm x > \mathbf{0}\). It follows that 
    \[
   \textbf{K}\bm x=\lambda_{\max}(\textbf{K})\bm x.
    \]
Assume \( \lambda_{\max}(\textbf{K})= 0 \). Thus, \(\bm x\) is the positive solution we desired.

\end{proof}

Furthermore, the convergence of the the normalized weight and curvature can be derived from the properties of $\textbf{\~{F}}$.
\begin{proposition}\label{the:1.1}For any $e\in E$ and a positive initial value  $w_0(e)$,
the solution $w(t,e)$ to the general Forman-Ricci flow \eqref{flow0} converges if and only if 
\[\lambda_{\max}(\textbf{\~{F}})\leq0.\]
Moreover,  the following convergence and divergence properties hold:
\begin{itemize}
     \item  If $\lambda_{\max}(\textbf{\~{F}})<0,$ then  \(\lim_{t \to \infty} w(t,e) = 0.\)
     \item 
    If $\lambda_{\max}(\textbf{\~{F}})=0$, then \(\lim_{t \to \infty} w(t,e) = c_{n}(e)>0.\) %where $c_{n}(e_l):=\frac{1}{\sqrt{m_2(e_{l})}}p_{nl}\sum_{j=1}^{n}p_{nj}\omega_{oj}\sqrt{m_2(e_j)}>0.$
    \item  If $\lambda_{\max}(\textbf{\~{F}})>0,$ then  \(\lim_{t \to \infty} w(t,e) = +\infty.\)
\end{itemize}    
\end{proposition}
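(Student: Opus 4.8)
The plan is to read the asymptotics directly off the explicit solution \eqref{solu}. Because $\textbf{\~{F}}$ is real symmetric it is orthogonally diagonalizable, so the representation $\omega(t,e_l)=\sum_{i=1}^{n}c_i(e_l)\exp\{\lambda_i t\}$ carries no polynomial-in-$t$ factors; the long-time behaviour is therefore governed entirely by the exponential rates $\lambda_i$ together with the signs of the coefficients $c_i(e_l)$. Since the eigenvalues are ordered $\lambda_1\le\cdots\le\lambda_n$, everything hinges on the largest one, $\lambda_n=\lambda_{\max}(\textbf{\~{F}})$, and on its leading coefficient $c_n(e_l)$.

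The first substantive step is to pin down the sign of $c_n(e_l)$. By Proposition \ref{irreducibleF} the eigenvalue $\lambda_n$ is algebraically simple and its eigenvector $\bm p_n$ is strictly positive, so $p_{nl}>0$ for every $l$. Since the initial data satisfy $\omega_{0j}>0$ and the edge measures satisfy $m_2(e_j)>0$, every factor in
\[
c_n(e_l)=\frac{1}{\sqrt{m_2(e_l)}}\,p_{nl}\sum_{j=1}^{n}p_{nj}\,\omega_{0j}\,\sqrt{m_2(e_j)}
\]
is positive, whence $c_n(e_l)>0$ for each edge $e_l$. This positivity, inherited from the Perron--Frobenius eigenvector, is the crux of the argument and the only place where the structure of the problem, rather than generic linear ODE theory, is genuinely used.

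With $c_n(e_l)>0$ in hand I would dispatch the three regimes. If $\lambda_n<0$, then every $\lambda_i<0$ and each summand $c_i(e_l)\exp\{\lambda_i t\}\to0$, so $\omega(t,e_l)\to0$. If $\lambda_n=0$, the simplicity of $\lambda_n$ forces $\lambda_i<0$ for all $i<n$; all summands but the last vanish and $\omega(t,e_l)\to c_n(e_l)>0$. If $\lambda_n>0$, I would factor out the dominant exponential, writing $\omega(t,e_l)=\exp\{\lambda_n t\}\big(c_n(e_l)+\sum_{i<n}c_i(e_l)\exp\{(\lambda_i-\lambda_n)t\}\big)$; the parenthetical tends to $c_n(e_l)>0$, so $\omega(t,e_l)\to+\infty$.

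The stated equivalence then follows by collecting the cases: $\omega(t,e_l)$ converges in exactly the two regimes $\lambda_n<0$ and $\lambda_n=0$, that is, precisely when $\lambda_{\max}(\textbf{\~{F}})\le0$, whereas $\lambda_{\max}(\textbf{\~{F}})>0$ produces divergence to $+\infty$. I do not expect any obstacle beyond establishing the positivity of $c_n(e_l)$; in particular there is no risk of leading-order cancellation (which would require $c_n(e_l)=0$, already excluded) and no risk of secular polynomial growth in the borderline case $\lambda_n=0$ (ruled out jointly by the symmetry, hence diagonalizability, of $\textbf{\~{F}}$ and by the algebraic simplicity of $\lambda_n$ from Proposition \ref{irreducibleF}).
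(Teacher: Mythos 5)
Your proposal is correct and follows essentially the same route as the paper: establish $c_n(e_l)>0$ from the Perron--Frobenius positivity of $\bm p_n$ (Proposition \ref{irreducibleF}) and then read the three regimes off the spectral representation \eqref{solu}. The paper states this in two lines and leaves the case analysis implicit; your write-up simply supplies the details (including the correct observation that simplicity of $\lambda_n$ rules out both leading-order cancellation and a competing eigenvalue in the borderline case $\lambda_n=0$).
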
 

\begin{proof}
For any $e\in E,$ due to Proposition \ref{irreducibleF}, we obtain  
\[c_{n}(e_l):=\frac{1}{\sqrt{m_2(e_{l})}}p_{nl}\sum_{j=1}^{n}p_{nj}\omega_{oj}\sqrt{m_2(e_j)}>0.\]
The convergence conclusions of $w(t,e)$ can be directly derived from the expression \eqref{solu} and the positivity of $c_{n}(e)$.
\end{proof}
\begin{theorem} \label{th:main}
For any $e\in E$, Forman curvature $F_w(t,e)$ on general graphs or Lin-Lu-Yau curvature $\kappa_{\omega}(t,e)$  on trees converges to  the constant $-\lambda_{\max}({\textbf{\~{F}}})$ as $t\rightarrow \infty$. And, the limit of the normalized weight exists, i.e.
    \[\lim_{t \to \infty}\bar{\omega}(t,e)=\frac{c_{n}(e)}{\sum_{e\in E}c_{n}(e)}.\]
\end{theorem}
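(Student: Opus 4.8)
The plan is to read off both limits directly from the closed-form solution \eqref{solu}, using the two spectral facts already in hand: that the top eigenvalue $\lambda_n=\lambda_{\max}(\tilde{\mathbf F})$ is algebraically simple (Proposition \ref{irreducibleF}), so that $\lambda_i<\lambda_n$ for every $i<n$, and that the leading coefficient $c_n(e)>0$ for every edge $e$ (established in the proof of Proposition \ref{the:1.1}). For trees the Forman and Lin-Lu-Yau curvatures agree by Remark \ref{tree}, so it suffices to treat the Forman flow on a general graph and transfer the conclusion.

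For the curvature, I would first use the flow equation itself to write, for each $e_l$,
\[
F_\omega(t,e_l)=-\frac{1}{\omega(t,e_l)}\frac{d}{dt}\omega(t,e_l)=-\frac{\sum_{i=1}^{n}c_i(e_l)\lambda_i e^{\lambda_i t}}{\sum_{i=1}^{n}c_i(e_l)e^{\lambda_i t}},
\]
where \eqref{solu} has been differentiated term by term (a finite sum, so no justification is needed). Factoring $e^{\lambda_n t}$ out of both numerator and denominator turns this into a ratio whose $i<n$ terms all carry the decaying factor $e^{(\lambda_i-\lambda_n)t}\to0$; since $c_n(e_l)>0$ the surviving $i=n$ contributions give $\lim_{t\to\infty}F_\omega(t,e_l)=-\lambda_n=-\lambda_{\max}(\tilde{\mathbf F})$. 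Equivalently one may write $F_\omega(t,e_l)=-\tfrac{d}{dt}\log\omega(t,e_l)$ and differentiate the asymptotic expansion $\log\omega(t,e_l)=\lambda_n t+\log c_n(e_l)+\log(1+o(1))$, whose error term has derivative tending to $0$.

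For the normalized weight I would substitute \eqref{solu} into the definition of $\bar\omega$ and again factor $e^{\lambda_n t}$ from top and bottom,
\[
\bar\omega(t,e_l)=\frac{\sum_{i=1}^{n}c_i(e_l)e^{(\lambda_i-\lambda_n)t}}{\sum_{i=1}^{n}\bar c_i\,e^{(\lambda_i-\lambda_n)t}}.
\]
As $t\to\infty$ every term with $i<n$ vanishes, so the numerator tends to $c_n(e_l)$ and the denominator to $\bar c_n=\sum_{e\in E}c_n(e)$, which is strictly positive because each $c_n(e)>0$. This yields exactly the claimed limit $c_n(e_l)/\sum_{e\in E}c_n(e)$.

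The only point requiring care, and the sole place the genuine spectral work enters, is the strict separation $\lambda_i<\lambda_n$ together with the positivity $c_n(e_l)>0$: without the former the leading exponential would not dominate, and without the latter the denominators above could degenerate so that the limit of the log-derivative would fail to exist. Both are supplied by the Perron-Frobenius analysis of $\tilde{\mathbf B}=\tilde{\mathbf F}+\beta\mathbf I$ carried out in Propositions \ref{irreducibleF} and \ref{the:1.1}; granting those, the remainder is the elementary asymptotics displayed above, so I do not expect any substantial further obstacle.
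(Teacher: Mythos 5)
Your proposal is correct and follows essentially the same route as the paper: both read the two limits directly off the closed-form solution \eqref{solu}, writing $F_\omega(t,e)=-\frac{d}{dt}\omega(t,e)/\omega(t,e)$, factoring out the dominant exponential $e^{\lambda_n t}$, and invoking the simplicity of $\lambda_{\max}(\textbf{\~{F}})$ together with the positivity of $c_n(e)$ from Propositions \ref{irreducibleF} and \ref{the:1.1}. If anything, you state explicitly the strict gap $\lambda_i<\lambda_n$ and the nonvanishing of the denominators, which the paper's proof uses only implicitly.
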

\begin{proof}
For any $e\in E,$
\[\begin{split}
    \bar{\omega}(t,e)&= \frac{\sum_{i=1}^nc_i(e) \exp\{\lambda_{i} t\}}{\sum_{i=1}^{n}\bar{c_i}\exp\{\lambda_{(i)} t\}}\\
    &=\frac{c_{n}(e)+\sum_{i=1}^{n-1}c_i(e)e^{(\lambda_i-\lambda_{i_0})t}}{\bar{c}_{n}+\sum_{i=1}^{n-1}\bar{c_i}e^{(\lambda_i-\lambda_{i_0})t}}\\
    &\rightarrow\frac{c_{n}(e)}{\bar{c}_{n}},~~t\rightarrow \infty.
\end{split}\]
Furthermore, 
\[
\begin{split}
    F_\omega(t,e) &=-\frac{\frac{d}{dt}\omega(t,e)}{\omega(t,e)}\\
   & =-\frac{c_{n}(e)\lambda_{n}+\sum_{i=1}^{n-1}c_i(e)\lambda_ie^{(\lambda_i-\lambda_{n})t}}{c_{n}(e)+\sum_{i=1}^{n-1}c_i(e)e^{(\lambda_i-\lambda_{n})t}}\\
   &\rightarrow-\lambda_{n},~~t\rightarrow \infty.
\end{split}
\]
\end{proof}
Let $F_\omega(\infty)=\lim_{t\rightarrow\infty}F_\omega(t,e)$, we obtain the following bounds of $F_\omega(\infty)$.
\begin{corollary}For any $e\in E$, 
     \[\max_{e\in E}\left(\frac{m_2(e)}{m_1(u)}+\frac{m_2(e)}{m_1(v)}-\sum_{e'\sim x\sim e}\frac{\sqrt{m_2(e)m_2(e')}}{m_1(x)}\right)\leq 
     F_\omega(\infty)
     \leq\min_{e\in E}\left(\frac{m_2(e)}{m_1(u)}+\frac{m_2(e)}{m_1(v)}\right).\] 
\end{corollary}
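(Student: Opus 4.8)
The plan is to reduce both estimates to a two-sided bound on the largest eigenvalue $\lambda_{\max}(\textbf{\~{F}})$ of the symmetric matrix $\textbf{\~{F}}$. By Theorem \ref{th:main} we already have $F_\omega(\infty)=-\lambda_{\max}(\textbf{\~{F}})$, so it suffices to sandwich $\lambda_{\max}(\textbf{\~{F}})$ between quantities read off directly from the entries of $\textbf{\~{F}}$. Writing $e_i=(u_i,v_i)$, the diagonal entry is $\textbf{\~{F}}_{ii}=-\left(\frac{m_2(e_i)}{m_1(u_i)}+\frac{m_2(e_i)}{m_1(v_i)}\right)$, and, since every off-diagonal entry is non-negative, the off-diagonal row sum is exactly $R_i:=\sum_{e'\sim x\sim e_i}\frac{\sqrt{m_2(e_i)m_2(e')}}{m_1(x)}$. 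Thus both sides of the corollary are, up to the sign flip coming from $F_\omega(\infty)=-\lambda_{\max}(\textbf{\~{F}})$, the extreme values over $i$ of the diagonal entries $\textbf{\~{F}}_{ii}$ and of the diagonal-plus-row-sum quantities $\textbf{\~{F}}_{ii}+R_i$.

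For the bound involving only the diagonal data I would use the Rayleigh quotient $\lambda_{\max}(\textbf{\~{F}})=\max_{\|v\|=1}v^{T}\textbf{\~{F}}\,v$, evaluated at each coordinate vector $\bm e_i$. This gives $\lambda_{\max}(\textbf{\~{F}})\geq \textbf{\~{F}}_{ii}$ for every $i$, hence $\lambda_{\max}(\textbf{\~{F}})\geq\max_i\textbf{\~{F}}_{ii}$. After the sign flip this produces the estimate controlled purely by $\frac{m_2(e)}{m_1(u)}+\frac{m_2(e)}{m_1(v)}$, optimized over $e$.

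For the remaining bound I would call on Gershgorin's circle theorem, which pairs naturally with the Perron--Frobenius analysis already carried out in Proposition \ref{irreducibleF}. Passing to the non-negative irreducible matrix $\textbf{\~{B}}=\textbf{\~{F}}+\beta\bm I$ from that proof, the standard row-sum bounds for a non-negative matrix control $\rho(\textbf{\~{B}})=\lambda_{\max}(\textbf{\~{F}})+\beta$ by the extreme row sums of $\textbf{\~{B}}$; since each off-diagonal entry is non-negative, the row sum in the row of $e_i$ equals $\textbf{\~{F}}_{ii}+R_i+\beta$. Combining the two row-sum inequalities and undoing the shift sandwiches $\lambda_{\max}(\textbf{\~{F}})$ between $\min_i(\textbf{\~{F}}_{ii}+R_i)$ and $\max_i(\textbf{\~{F}}_{ii}+R_i)$, and the sign flip then converts this into the curvature estimate built from $\frac{m_2(e)}{m_1(u)}+\frac{m_2(e)}{m_1(v)}-\sum_{e'\sim x\sim e}\frac{\sqrt{m_2(e)m_2(e')}}{m_1(x)}$.

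The eigenvalue estimates themselves are routine once $\textbf{\~{F}}$ is recognized as symmetric with non-negative off-diagonal entries; the step requiring the most care is the bookkeeping around the sign reversal $F_\omega(\infty)=-\lambda_{\max}(\textbf{\~{F}})$. This reversal turns an upper eigenvalue bound into a lower curvature bound and vice versa, and it simultaneously interchanges $\max_i$ and $\min_i$ (equivalently $\max_e$ and $\min_e$) in the final statement. Determining which extremum belongs on which side of $F_\omega(\infty)$ — and checking that the two resulting endpoints are consistent, i.e. that the lower bound does not exceed the upper bound — is the delicate point; I would verify the placement of $\max_e$ versus $\min_e$ on a small test case, such as a short path, before committing to the final inequalities.
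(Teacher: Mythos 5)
Your proof of the right-hand inequality is correct, and it takes a genuinely different route from the paper: you get $\lambda_{\max}(\textbf{\~{F}})\geq\max_i\textbf{\~{F}}_{ii}$ by evaluating the Rayleigh quotient at coordinate vectors and then flip the sign via $F_\omega(\infty)=-\lambda_{\max}(\textbf{\~{F}})$, whereas the paper passes to the limit in the explicit formula for $F_\omega(t,e)$ and discards the subtracted terms using positivity of the Perron coefficients $c_n(e')$. Both are valid; yours needs nothing beyond the symmetry of $\textbf{\~{F}}$ and Theorem \ref{th:main}.

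The left-hand inequality is where the ``delicate point'' you flagged is not just delicate but decisive, and the problem lies in the statement rather than in your method. Write $A_e=\frac{m_2(e)}{m_1(u)}+\frac{m_2(e)}{m_1(v)}$ and $R_e=\sum_{e'\sim x\sim e}\frac{\sqrt{m_2(e)m_2(e')}}{m_1(x)}$, so that row $e$ of $\textbf{\~{F}}$ has diagonal entry $-A_e$ and off-diagonal sum $R_e$. Gershgorin --- equivalently, your row-sum bound for the non-negative matrix $\textbf{\~{B}}=\textbf{\~{F}}+\beta\bm I$ --- places $\lambda_{\max}(\textbf{\~{F}})$ in \emph{some} disk, hence yields only $\lambda_{\max}(\textbf{\~{F}})\leq\max_e\left(R_e-A_e\right)$, i.e. $F_\omega(\infty)\geq\min_e\left(A_e-R_e\right)$: a minimum, where the corollary asserts a maximum. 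This gap cannot be closed, because the corollary as printed is false. On the path with four vertices and $m_1=m_2\equiv1$ one has $\textbf{\~{F}}=\mathbf{F}=-2\bm I+\mathbf{B}$, where $\mathbf{B}$ is the adjacency matrix of the three-vertex line graph, so $F_\omega(\infty)=-\lambda_{\max}(\textbf{\~{F}})=2-\sqrt{2}\approx0.59$ (consistent with Theorem \ref{the:4.1}); but a pendant edge has $A_e-R_e=2-1=1$, so $\max_e\left(A_e-R_e\right)=1>F_\omega(\infty)$. The paper's own proof commits precisely the misstep you were guarding against: after invoking Gershgorin it asserts $\lambda_{\max}(\textbf{\~{F}})\leq R_e-A_e$ \emph{for every} $e\in E$, which would require $\lambda_{\max}$ to lie in every disk rather than in some disk. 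Your proposed sanity check on a short path is exactly what exposes this; carrying it out shows that the correct statement has $\min_e$ on the left-hand side, and with that correction your argument proves it completely.
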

\begin{proof}

For any $e,e'\in E$,
\[\frac{\omega(t,e')}{\omega(t,e)}\rightarrow \frac{c_{n}(e')}{c_{n}(e)}~~\mbox{as}~~t\rightarrow \infty.\]
It follows that, for any $e\in E$,
\begin{equation*}
  F_{\omega}(t,e)\rightarrow\frac{m_2(e)}{m_1(u)}+\frac{m_2(e)}{m_1(v)}-\sum_{e_u\sim u,e_u\neq e}\frac{m_2(e_u)}{m_1(u)}\frac{c_{n}(e_u)}{c_{n}(e)}-\sum_{e_v\sim v,e_v\neq e}\frac{m_2(e_v)}{m_1(v)}\frac{c_{n}(e_v)}{c_{n}(e)},
\end{equation*}
as $t\rightarrow \infty$.
From Theorem \ref{th:main}, we have for any $e\in E$,
\[F_\omega(\infty)=\frac{m_2(e)}{m_1(u)}+\frac{m_2(e)}{m_1(v)}-\sum_{e_u\sim u,e_u\neq e}\frac{m_2(e_u)}{m_1(u)}\frac{c_{n}(e_u)}{c_{n}(e)}-\sum_{e_v\sim v,e_v\neq e}\frac{m_2(e_v)}{m_1(v)}\frac{c_{n}(e_v)}{c_{n}(e)},\]
Due to the positivity of $c_{n}$, we get the upper bound of $F_\omega(\infty)$.

On the other hand, we apply Gerschgorin's Disk Theorem to obtain that all the eigenvalues of {\textbf{\~{F}}} lie in the union of disks
$\cup_{i=1}^nD_i,$
where
    \[ D_i = \left\{ z \in \mathbb{R} : |z-{\textbf{\~{F}}}_{ii}| \leq \sum_{j \neq i} |{\textbf{\~{F}}}_{ij}| \right\}.\]
Therefore,  we have for any $e\in E$,
\[\lambda_{\max}({\textbf{\~{F}}})\leq -\frac{m_2(e)}{m_1(u)}-\frac{m_2(e)}{m_1(v)}+\sum_{e'\sim x\sim e}\frac{\sqrt{m_2(e)m_2(e')}}{m_1(x)}.\]
From Theorem \ref{th:main}, we have the lower bounds of $F_\omega(\infty)$. 
\end{proof}

\section{Examples and simulations}
In this section, we investigate convergence of the solution to Lin-Lu-Yau Ricci flow \eqref{ollivier} on trees for special measures or on special trees.

\subsection{The case $m_1=m_2\equiv1$}
Let $T=(V,E)$ be a tree. In this case, $\mathbf{F}$ has the following simple version:
    \begin{equation}\label{examp2}
        \mathbf{F}=-2\mathbf{I}+\mathbf{B},
    \end{equation}
    where $\mathbf{B}$ is the adjacency matrix for the edge set $E$ of the tree, i.e.
 \[\textbf{B}(e_i,e_j)=\left\{\begin{aligned}
   &1,& e_j\sim e_i,\\
   &0,&\mbox{otherwise.}
\end{aligned}
  \right.\]     
Notice that $\textbf{B}$ is also the adjacency matrix for the vertex set of the line graph $L(T)$. 

This next result establishes a complete classification for the convergence of the Ricci flow on trees with uniform measures.
%Notice that the sum of each row of $\mathbf{K}$ precisely equals $-4+d(u_i)+d(u_j)$ of the corresponding edge $e=(u_i,u_j)$, which exactly equals the negative of the classical Forman curvature. 
%曲率的收敛性是怎样的？

\begin{theorem}\label{the:4.1}
\label{coro1}
 For a tree $T=(V,E)$ with $n$ edges,  we assume that $m_1=m_2\equiv1$. Let $\omega(t,e)$ be the solution to the Ricci flow \eqref{ollivier}. Then:
 \begin{enumerate}
    \item If  $T$ is a path, then $\omega(t,e)$ for any $e\in E$  converges to $0$, and the Ricci curvature $\kappa_w(t)$ converges to a positive constant.   
    \item For $K_{1,3}$,  $w(t,e)$ for any $e\in E$  converges to a nonzero real number, and the Ricci curvature $\kappa_w(t)$ converges to $0$.
    \item If $\max_{x\in V}d(x)\geq 3$, and $T$ is not $K_{1,3}$, then $\omega(t,e)$ for any $e\in E$ diverges to infinity, and the Ricci curvature $\kappa_w(t)$ converges to a negative constant.
 \end{enumerate}
\end{theorem}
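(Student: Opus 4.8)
The plan is to reduce the entire statement to a single spectral question about the line graph of $T$. When $m_1=m_2\equiv 1$, the symmetrizing matrix $\mathbf{M}$ in \eqref{relation} is the identity, so $\textbf{\~{F}}=\mathbf{F}=-2\mathbf{I}+\mathbf{B}$ by \eqref{examp2}, where $\mathbf{B}$ is the adjacency matrix of the line graph $L(T)$. Consequently $\lambda_{\max}(\textbf{\~{F}})=\lambda_{\max}(\mathbf{B})-2$, and by Proposition \ref{the:1.1} (the weight behaviour) together with Theorem \ref{th:main} (the limiting curvature equals $-\lambda_{\max}(\textbf{\~{F}})=2-\lambda_{\max}(\mathbf{B})$), all three conclusions follow once I locate $\lambda_{\max}(\mathbf{B})$ relative to $2$. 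Thus it suffices to prove the spectral trichotomy: $\lambda_{\max}(L(T))<2$ exactly when $T$ is a path, $\lambda_{\max}(L(T))=2$ exactly when $T=K_{1,3}$, and $\lambda_{\max}(L(T))>2$ otherwise. I would first note that the three cases are exhaustive, since a tree with maximum degree at most $2$ is precisely a path, and the remaining trees (maximum degree $\geq 3$) split into $K_{1,3}$ and everything else.

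For the first two cases I compute $L(T)$ explicitly. If $T$ is a path with $n$ edges, then $L(T)$ is the path $P_n$ on $n$ vertices, whose largest adjacency eigenvalue is the standard value $2\cos\!\big(\tfrac{\pi}{n+1}\big)<2$; this gives $\lambda_{\max}(\textbf{\~{F}})<0$, hence $\omega(t,e)\to 0$ and $\kappa_\omega(t,e)\to 2-2\cos\!\big(\tfrac{\pi}{n+1}\big)>0$. If $T=K_{1,3}$, its three edges pairwise share the centre, so $L(T)=K_3$ with $\lambda_{\max}=2$, giving $\lambda_{\max}(\textbf{\~{F}})=0$; Proposition \ref{the:1.1} then yields the positive limit $c_n(e)$ for the weight and $\kappa_\omega(t,e)\to 0$.

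For the third case I argue by exhibiting an induced subgraph of $L(T)$ whose spectral radius already exceeds $2$, invoking the monotonicity principle that the largest eigenvalue of an induced subgraph is at most $\lambda_{\max}(L(T))$ (Rayleigh quotient / interlacing). If $\max_x d(x)\geq 4$, the edges incident to a vertex of degree $d\geq 4$ induce a clique $K_d$ in $L(T)$, with $\lambda_{\max}=d-1\geq 3>2$. If the maximum degree is exactly $3$ and $T\neq K_{1,3}$, then $T$ has at least four edges, so a degree-$3$ vertex $v$ has a neighbour carrying a further edge; the four edges $va,vb,vc,cd$ induce in $L(T)$ the ``paw'' graph (a triangle with one pendant vertex), whose spectral radius strictly exceeds $2$. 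In every subcase $\lambda_{\max}(\mathbf{B})>2$, so $\lambda_{\max}(\textbf{\~{F}})>0$, giving $\omega(t,e)\to+\infty$ and a negative limiting curvature.

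The routine parts are the reduction and the explicit eigenvalues of $P_n$, $K_3$, and $K_d$. The one genuinely delicate point, and the main obstacle, is the strict inequality for the paw in the degree-$3$ case: weak monotonicity only yields $\lambda_{\max}(L(T))\geq\lambda_{\max}(K_3)=2$, which is not enough. I would secure the strict bound in one of two ways: either by computing the paw's characteristic polynomial $x^4-4x^2-2x+1$ and checking that it is negative at $x=2$ (so a root lies beyond $2$), or by appealing to the strict Perron-Frobenius monotonicity of the spectral radius under attaching a pendant edge to a connected graph, using the irreducibility already established in Proposition \ref{irreducibleF}. A secondary care point is verifying that the chosen vertex sets induce exactly $K_d$, respectively the paw, in $L(T)$, i.e. that no spurious line-graph adjacencies appear among them.
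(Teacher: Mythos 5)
Your proposal is correct and follows essentially the same route as the paper: reduce via $\mathbf{F}=-2\mathbf{I}+\mathbf{B}$ to locating $\lambda_{\max}$ of the line graph relative to $2$, compute the path and $K_{1,3}$ cases explicitly, and settle the remaining trees by exhibiting a clique $K_d$ (maximum degree $\geq 4$) or the paw graph (maximum degree exactly $3$, $T\neq K_{1,3}$) as an induced subgraph of $L(T)$ and invoking Rayleigh-quotient monotonicity of the largest eigenvalue. Your justification that the paw's spectral radius strictly exceeds $2$ (the characteristic polynomial $x^4-4x^2-2x+1$ is negative at $x=2$) is in fact more self-contained than the paper's bare numerical assertion that it exceeds $2.17$.
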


\begin{proof}
The condition  $m_2\equiv1$ ensures that  $\mathbf{F}$ is symmetric, and from \eqref{examp2}, we have 
\[\lambda_{\max}(\mathbf{F})=-2+\lambda_{\max}(\mathbf{B}).\]

For a path with $n$ edges, its line graph $P_{n}$ is also a path with \(n\) vertices.  Therefore, from spectral graph theory, we have 
\[
\lambda_{\max}(\mathbf{B}_{P_{n}}) = 2\cos\left(\frac{\pi}{n+1}\right).
\]
From Proposition \ref{the:1.1}.
\[\kappa_{\omega}(t,e)\rightarrow2\left(1-\cos\left(\frac{\pi}{n+1}\right)\right)>0, \]
and $\omega(t,e)\rightarrow 0$ as $t\rightarrow \infty$ for any $e\in E$.

For $K_{1,n}$,  we have
    \[\mathbf{F}_{K_{1,n}}=-3\mathbf{I}+\mathbf{J},\]
where $\mathbf{J}$ is the matrix with all entries equal to 1.
Therefore,
\[\lambda_{\max}(\mathbf{F}_{K_{1,n}})=n-3.\]
%考察一下曲率收敛n=3，顶点数4
and its corresponding eigenvector is the vector with all components equal to $\frac{1}{\sqrt{n}}$. If $n=3,$ then $\lambda_{\max}(\mathbf{F})=0$, it follows that for any $e\in E,$ 
$\kappa_{\omega}(t,e)\rightarrow 0 $
and
$\omega(t,e)\rightarrow\frac{1}{3}\sum_{j=1}^{3}\omega_{oj}$ as $t\rightarrow \infty$.

To obtain the last conclusion, 
we first  {\it claim} that 

%\begin{equation}\label{claim}
%    \lambda_{\max}(\mathbf{B})\in [\max_{x\in V}d(x)-1,n-1].
%\end{equation}
\[\lambda_{\max}(\mathbf{B})\geq\max_{x\in V}d(x)-1,\]
%it follows that the first two conclusions are immediately derived from Proposition \ref{the:1.1}.
%To prove the {\it claim}, we apply Gerschgorin's Disk Theorem to obtain 
%all of the eigenvalues of $\mathbf{B}$ lie in the union of disks $\cup_{i=1}^nD_i,$ where
%\[ D_i = \left\{ z \in \mathbb{R} : |z| \leq \sum_{j \neq i} |\mathbf{B}(e_i,e_j)| \right\}.\]
%Therefore, let $e_i=(u_i,v_i)$, we have
%\[\lambda_{\max}(\mathbf{B})\leq \max_{e_i\in %E}\left(d(u_i)+d(v_i)-2\right)\leq n-1.\]
%Here, the last inequality employs the following formula, for any $e=(u,v)$ on a tree,
%\[d(u)+d(v)\leq n+1.\]
%On the other hand, 
To end this, we consider a vertex $v$ of maximum degree $d:=\max_{x\in V}d(x)$ in the tree, with neighbors $u_1, u_2, \ldots, u_{d}$. In the line graph $L(T)$, all edges $e_i=\{v, u_i\}$ share the common vertex $v$. Thus, the set $\{e_1, e_2, \ldots, e_{d}\}$ induces a complete subgraph $K_d$. The largest eigenvalue of the adjacency matrix of $K_d$ is $d- 1$. Since $K_d$ is an induced subgraph of $L(T)$, its adjacency matrix $\mathbf{A}$ is a principal submatrix of $\mathbf{B}$ (the adjacency matrix of $L(T)$).  Let $S$ be the vertex set of $K_d$. Set $X=\{x\in \mathbb{R}^{n}: x=0 ~\mbox{in $S$ and $x\neq 0$ in $S^c$}~\}$. The Rayleigh quotient satisfies: for any $x\in X,$

\[\frac{x^T\mathbf{B} x}{x^T x} = \frac{y^T\mathbf{A} y}{y^T y},\]
where $y = x|_S$ (the restriction of $x$ to $S$). Therefore,
\[\lambda_{\max}(\mathbf{B}) =\max_{0\neq z \in \mathbb{R}^{n}} \frac{z^T\mathbf{B} z}{z^T z} \geq \max_{0\neq x \in X} \frac{x^T\mathbf{B} x}{x^T x} = \max_{y \neq 0} \frac{y^T\mathbf{A} y}{y^T y} = \lambda_{\max}(\mathbf{A}) = d - 1.\]
This completes the claim.
Therefore, if $\max_{x\in V}d(x)> 3$, one has 
\[\lambda_{\max}(\mathbf{F})>0. \]
Next, we consider the minimal graphs $H$ (not $K_{1,3}$) with maximum degree $3$ that contain $K_{1,3}$ as a subgraph. Its line graph is the Paw graph, i.e., $K_3$ with a pendant edge attached, and the largest eigenvalue of the adjacency matrix is larger than $2.17$.
Using a method similar to that employed in the proof of the claim, it can be shown that for any tree containing $H$ as a subgraph, the largest eigenvalue of the adjacency matrix of its line graph exceeds $2.17$. Therefore,   for a tree with maximum degree at least $3$ but not $K_{1,3}$,
$\kappa_{\omega}(t,e)\rightarrow -\lambda_{\max}(\mathbf{F})(<0) $
and
$\omega(t,e)\rightarrow\infty$ as $t\rightarrow \infty$ for any $e\in E$.

\end{proof}

\subsection{The case $\operatorname{Deg}(x)\equiv1$}
Convergence under normalized measure becomes much more difficult.  Following the earlier analysis, considering $\mathbf{\tilde{F}}$ is more convenient than considering $\mathbf{F}$. In the following examples, we require that $\operatorname{Deg}(x)\equiv1$ for any $x\in V.$
\begin{example}
    We consider a  path $p=\{v_1,v_2,\cdots,v_{n+1}\}$ with $v_i\sim v_{i+1}(i=1,\cdots,n)$. Let $e_i=(v_i,v_{i+1}), i=1,\cdots, n$. Set $m_2(e_i)=a_i$.
    By  $\operatorname{Deg}(x)=1$, the coefficient matrix $\mathbf{\tilde{F}}$ can be rewritten as 
\[\mathbf{\tilde{F}}(e_i,e_i)
=\left\{\begin{aligned}
  & -\left(\frac{a_i}{a_{i-1}+a_i}+\frac{a_i}{a_i+a_{i+1}}\right),
& i= 2,\cdots, n-1,\\
  & -1-\frac{a_1}{a_1+a_2},& i=1,\\
  &-1-\frac{a_n}{a_n+a_{n-1}},& i=n.\\
\end{aligned}
  \right.\]
  and 
\[\mathbf{\tilde{F}}(e_i,e_{i+1})=\frac{\sqrt{a_ia_{i+1}}}{a_i+a_{i+1}}.\]
Otherwise, $\mathbf{\tilde{F}}(e_i,e_{j})=0$.
For any $\bm{x}\in \mathbb{R}^n$, we obtain 
\[\bm{x}^T\mathbf{\tilde{F}}\bm{x}=-\sum_{i=1}^n\frac{1}{a_i+a_{i+1}}(\sqrt{a_i}x_i-\sqrt{a_{i+1}}x_{i+1})^2-x_1^2-x_{n}^2\leq0,\]
Furthermore,
\[\bm{x}^T\mathbf{\tilde{F}}\bm{x}=0,\]
if and only if $\bm{x}=\mathbf{0}$. It shows that the solution $\bm{\omega}(t)$ to the Ricci flow \eqref{ollivier}  on the path converges to $\bm 0$ as $t\rightarrow \infty$, and Lin-Lu-Yau curvature converges  to a positive constant.

%normalized wieght 怎么收敛

%Moreover,
%\[\kappa_\omega(e_k)\rightarrow -\lambda_{\max}>0, \quad k=1,\cdots, N.\]

\end{example}

\begin{example}
For a star graph $s=\{v_0,v_1,\cdots,v_n\}$, and $e_i=(v_0,v_i)$ for $i=1,\cdots,n.$ Let $m_2(e_i)=a_i$, we have $m_1(v_i)=a_i(i=1,\dots, n)$ and $m_1(v_0)=\sum_{i=1}^na_i$. Set $a=\sum_{i=1}^na_i$. Therefore, the coefficient matrix $\mathbf{\tilde{F}}$ can be rewritten as 
\[\mathbf{\tilde{F}}(e_i,e_i)=-1-\frac{a_i}{a},\]
and for any $i\neq j$, 
\[\mathbf{\tilde{F}}(e_i,e_j)=\frac{\sqrt{a_ia_j}}{a}.\]
Then we decompose $\mathbf{\tilde{F}}$ into 
\[\mathbf{\tilde{F}}=-\mathbf{I}+\frac{1}{a}(\mathbf{v}\mathbf{v}^T-2\mathbf{D}),\]
where $\mathbf{D}=\operatorname{diag}(a_1,a_2,\cdots,a_n)$ and $\bm{v}=(\sqrt{a_1},\sqrt{a_2},\cdots,\sqrt{a_n}).$
We claim that 
\begin{equation}\label{egei}
\lambda_{\max}(\bm{v}\bm{v}^T-2\mathbf{D})<a.
\end{equation}
From it, we obtain that $\mathbf{\tilde{F}}$ is negative definite, which shows that the solution $\bm{\omega}(t)$ to the Ricci flow \eqref{ollivier} on the star graph converges to $\bm{0}$ as $t\rightarrow \infty$.
In order to prove \eqref{egei}, we let 
\[f(\bm{x})=\bm{x}^T(\bm{v}\bm{v}^T-2\mathbf{D})\bm{x}-a\bm{x}^T\bm{x}\]
for any $\bm{x}\in \mathbb{R}^n$. Applying the Cauchy-Schwarz inequality to get 
\[(\mathbf{v}^T\bm{x})^2\leq \|\mathbf{v}\|^2\|\bm{x}\|^2=a\|\bm{x}\|^2,\]
and the equality holds if and only if there exists $c\in \mathbb{R}$ such that $\bm{x}=c\bm{v}$. In this case, 
\[f(\bm{x})=-2c^2\bm{v}^T\mathbf{D}\bm{v}=-2c^2a^2<0.\]
Otherwise, 
\[f(\bm{x})<-2\bm{x}^T\mathbf{D}\bm{x}\leq0.\]
Overall, we finish the claim.
%Observe that the main diagonal elements of $\mathbf{\tilde{K}}$ are all negative, while the off-diagonal elements are all positive. Therefore, by the Perron-Frobenius theorem, all components of the eigenvector of $\mathbf{\tilde{K}}+\max|{\mathbf{\tilde{K}}(e_i,e_i)}|\mathbf{I}$ and then of $\mathbf{\tilde{K}}$ are positive. 
Therefore, for any $e$, $\omega(t,e)\rightarrow \infty$
\[\kappa_\omega(t,e)\rightarrow -\lambda_{\max}(\mathbf{\tilde{F}})>0, \quad t\rightarrow \infty.\]
\end{example}

\subsection{Simulations}
We present several simulations of the convergence results of Ricci flow.  Figure \ref{fig:ricci_flow_example} illustrates Lin-Lu-Yau Ricci flow \eqref{ollivier} on the $K_{1,3}$ and $K_{1,6}$ under the uniform measure and the normalized measure, revealing distinct convergence behaviors. For $K_{1,3}$, under the uniform measure, the edge weights converge to a positive constant while the Ricci curvature converges to zero; in contrast, under the normalized measure, the weights decay to zero whereas the curvature converges to a positive constant. $K_{1,6}$ exhibits markedly different behavior. Under the uniform measure, the edge weights blow up to positive infinity, accompanied by convergence of the curvature to a negative constant, while under the normalized measure, the weights converge to zero and the curvature stabilizes at a positive constant. These results reveal that the convergence behavior of Ricci flow is fundamentally governed by the underlying graph structure and the choice of measure.
% In both measures, the dynamics converge as edge weights and edge curvatures approach steady values, but the equilibrium states differ. In the uniform measure case, the three edge weights are driven to the same steady value $\frac{1}{3}$ and the curvatures converge to $0$, i.e., the flow produces a symmetric, flat equilibrium. By contrast, under the normalized measure, each edge converges to a different steady weight, and the edge curvatures converge to a positive constant.

The second experiment simulated the normalized Lin-Lu-Yau curvature flow \eqref{flow-equation-nor} for a tree with maximum degree of $4$, see Figure \ref{fig:tree_ricci_flow 2}. Notable convergence properties are observed: (1) all edge curvatures converge to a negative limiting value, reflecting the tree's hyperbolic-like nature; (2) despite different initial weights, structurally symmetric edges converge to an identical equilibrium state,  specifically, $\omega(e_{15})$ and $\omega(e_{25})$ converge to the same value, as do $\omega(e_{67})$ and $\omega(e_{68})$, demonstrating the flow's ability to detect structural symmetries; (3) the limiting weights of different edges reflect their distinct local geometry in the tree, e.g., $e_{56}$ converges to the largest weight as it connects the most distinct branches.
\begin{figure}[H]
  \centering

  % --- Two subfigures with a vertical dashed divider between them ---
  \begin{minipage}{0.48\linewidth}
    \centering
    \raisebox{0.2cm}{\textbf{3-Star} }\\
    % --- 3-Star above ---
    \begin{tikzpicture}[baseline]
        % center and leaves
        \node[draw,circle,inner sep=2pt,minimum size=8mm,fill=black!10] (1) at (0,0) {1};
        \node[draw,circle,inner sep=2pt,minimum size=8mm] (2) at (90:2) {2};
        \node[draw,circle,inner sep=2pt,minimum size=8mm] (3) at (-30:2) {3};
        \node[draw,circle,inner sep=2pt,minimum size=8mm] (4) at (210:2) {4};
        \draw[thick] (1) -- (2);
        \draw[thick] (1) -- (3);
        \draw[thick] (1) -- (4);
    \end{tikzpicture}
    \vspace{6pt}
    % uniform results
    \raisebox{0.2cm}{\textbf{(a) uniform measure} }\\
    \raisebox{0.2cm}{$m_1=m_2\equiv 1$}\\
    \includegraphics[width=\linewidth]{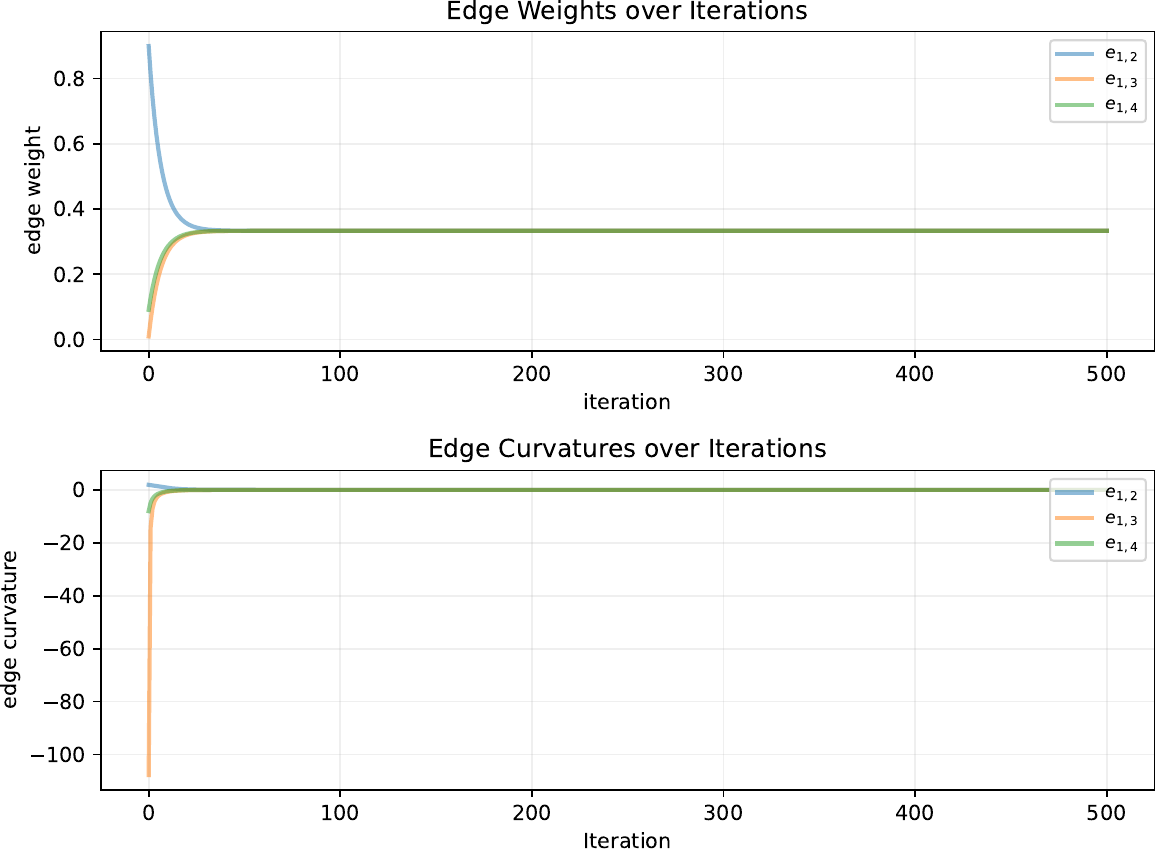}
    % normalized results
    \vspace{6pt}
    \raisebox{0.2cm}{\textbf{(b) normalized measure}}\\
    \raisebox{0.2cm}{$m_2(e_{12})=1$, $m_2(e_{13})=2$, $m_2(e_{14})=3$}\\
    \includegraphics[width=\linewidth]{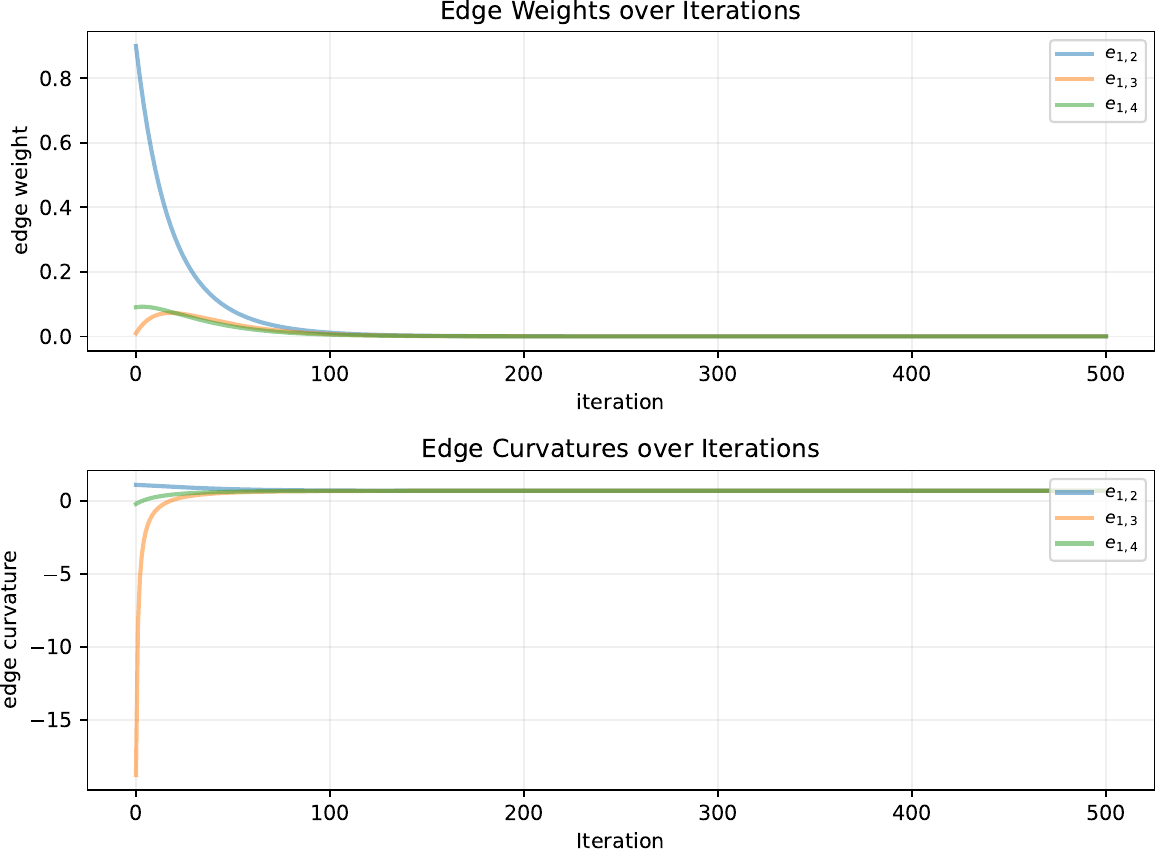}
  \end{minipage}%
  \hfill
  % vertical dashed divider: use a narrow minipage to draw the line
  \begin{minipage}{0.02\linewidth}
    \centering
    \vspace{2pt}
    \begin{tikzpicture}
      \draw[gray!60, dashed, line width=0.6pt] (0cm,4cm) -- (0cm,-15cm);
    \end{tikzpicture}
  \end{minipage}%
  \hfill
  \begin{minipage}{0.48\linewidth}
    \centering
    \raisebox{0.2cm}{\textbf{6-Star} }\\
    % --- 3-Star above ---
    \begin{tikzpicture}[baseline]
        % center and leaves
        \node[draw,circle,inner sep=2pt,minimum size=8mm,fill=black!10] (1) at (0,0) {1};
        \node[draw,circle,inner sep=2pt,minimum size=8mm] (2) at (180:1.732) {2};
        \node[draw,circle,inner sep=2pt,minimum size=8mm] (3) at (120:1.732) {3};
        \node[draw,circle,inner sep=2pt,minimum size=8mm] (4) at (60:1.732) {4};
        \node[draw,circle,inner sep=2pt,minimum size=8mm] (5) at (0:1.732) {5};
        \node[draw,circle,inner sep=2pt,minimum size=8mm] (6) at (-60:1.732) {6};
        \node[draw,circle,inner sep=2pt,minimum size=8mm] (7) at (-120:1.732) {7};
        \draw[thick] (1) -- (2);
        \draw[thick] (1) -- (3);
        \draw[thick] (1) -- (4);
        \draw[thick] (1) -- (5);
        \draw[thick] (1) -- (6);
        \draw[thick] (1) -- (7);
    \end{tikzpicture}
    \vspace{6pt}
    % uniform results
    \raisebox{0.2cm}{\textbf{(c) uniform measure} }\\
    \raisebox{0.2cm}{$m_1=m_2\equiv 1$}\\
    \includegraphics[width=\linewidth]{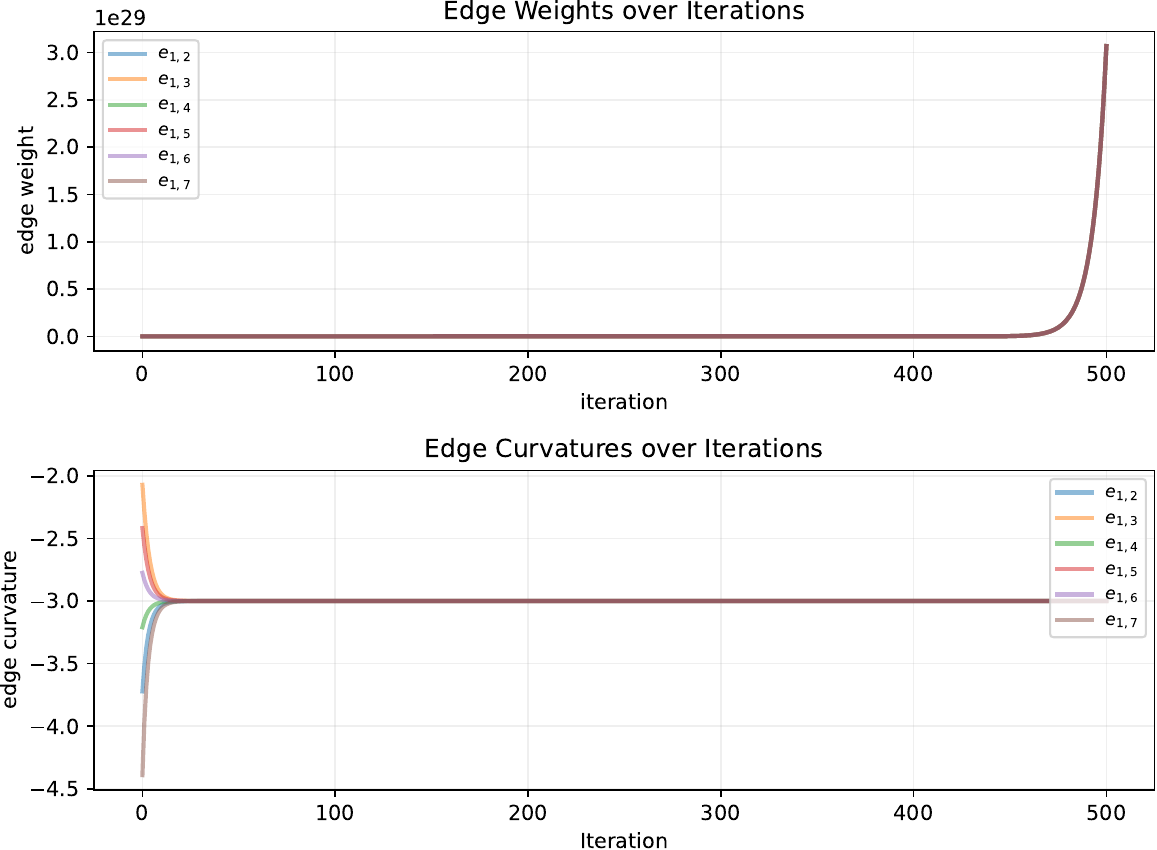}
    % normalized results
    \vspace{6pt}
    \raisebox{0.2cm}{\textbf{(d) normalized measure}}\\
    \raisebox{0.2cm}{$m_2(e_{ij})\equiv 1, \forall e_{ij}\in E$}\\
    \includegraphics[width=\linewidth]{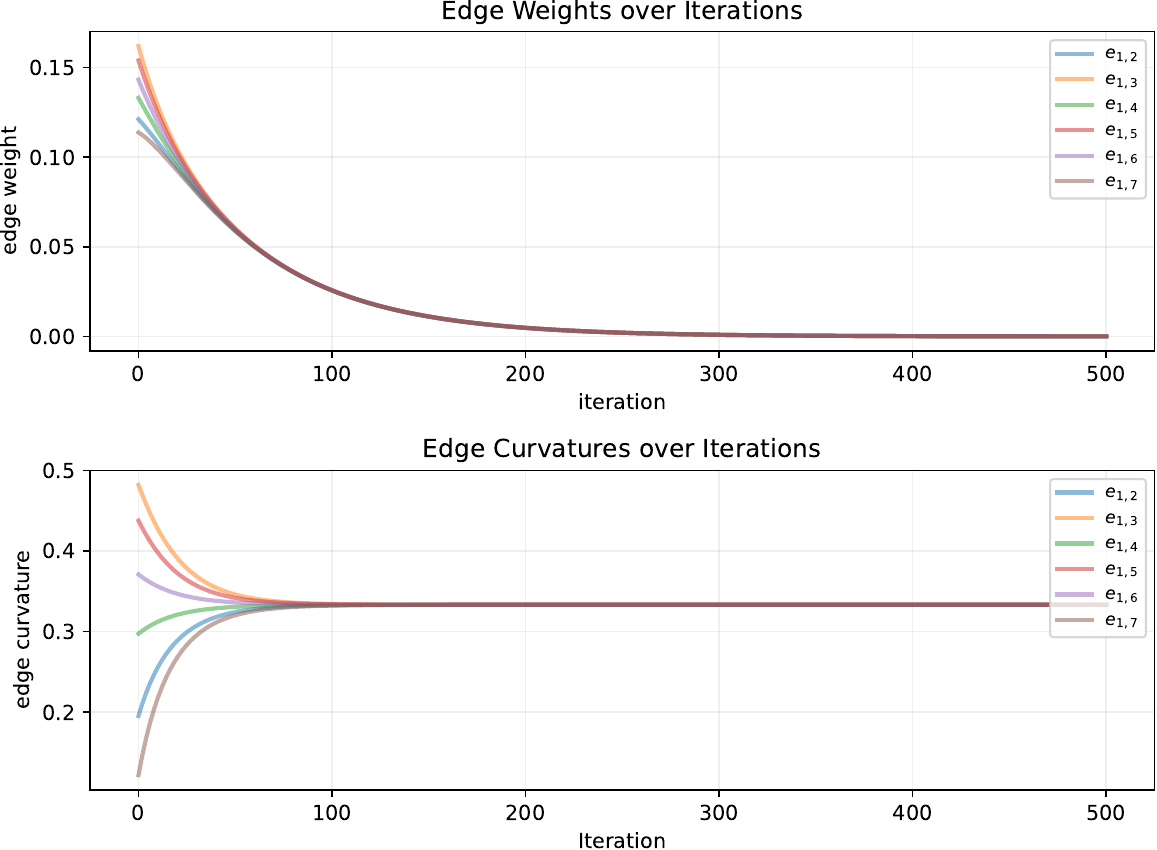}
  \end{minipage}

  \caption{
    Comparison of Ricci flow convergence under two different measures on $K_{1,3}$ and $K_{1,6}$ graphs. 
     %Thus, the uniform measure yields a symmetric equal-weight, zero-curvature limit, whereas the normalized measure produces a nontrivial heterogeneous weight distribution with uniform positive curvature.
  }
  \label{fig:ricci_flow_example}
\end{figure}

\begin{figure}[H]
\centering
\begin{minipage}{0.4\linewidth}
\centering
\raisebox{0.2cm}{\textbf{uniform measure} }\\
\raisebox{0.2cm}{$m_1=m_2\equiv 1$.}\\
\raisebox{0.2cm}{$\omega_0(e)=\frac{1}{7}\pm \delta$, $\forall e\in E$,}\\
\raisebox{0.2cm}{$\delta\in\{0, 0.01, 0.02, 0.03\}$.}\\
\begin{tikzpicture}[scale=1, every node/.style={draw,circle,inner sep=2pt,minimum size=8mm}]
% Nodes 
\node (v5) at (0,0) {5}; 
\node (v1) at (0,1.6) {1}; 
\node (v2) at (-1.6,0) {2}; 
\node (v3) at (-0.8,-1.6) {3}; 
\node (v4) at (0.9,-1.6) {4}; 
\node (v6) at (1.6,0) {6}; 
\node (v7) at (2.4,-1.6) {7}; 
\node (v8) at (2.4,1.6) {8}; 
% Edges
  \draw[thick] (v5) -- (v1);
  \draw[thick] (v5) -- (v2);
  \draw[thick] (v4) -- (v3);
  \draw[thick] (v5) -- (v4);
  \draw[thick, Salmon] (v5) -- (v6);
  \draw[thick] (v6) -- (v7);
  \draw[thick] (v6) -- (v8);
\end{tikzpicture}\\
\end{minipage}%
\hfill
\begin{minipage}{0.6\linewidth}
\centering
% Replace this path with your generated Ricci flow figure
\includegraphics[width=\linewidth]{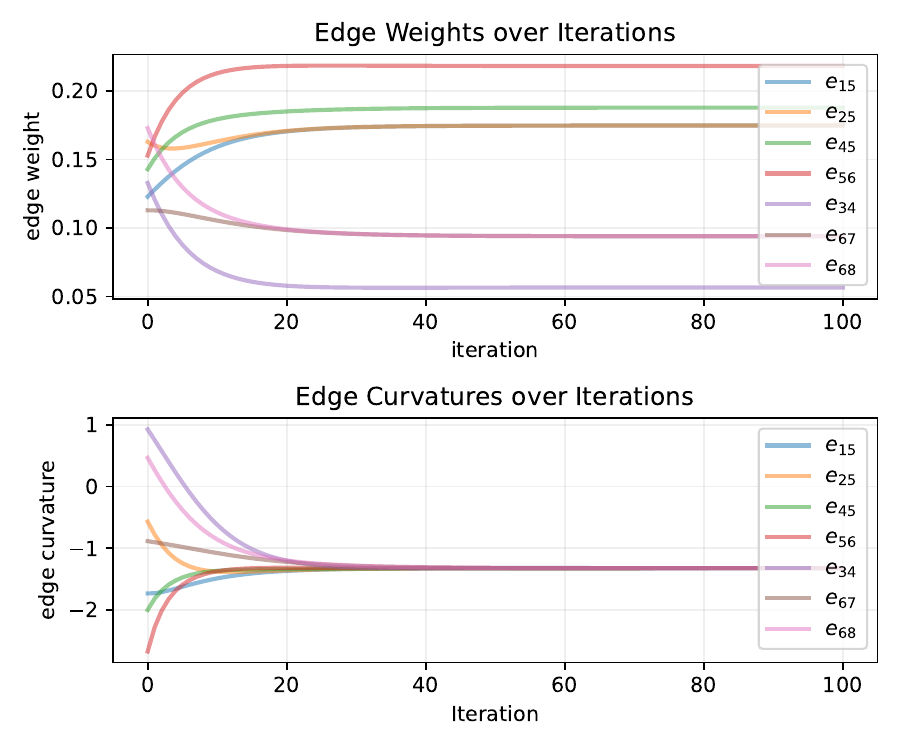}
\end{minipage}

\caption{The evolution for a tree with maximum degree of $4$. }
\label{fig:tree_ricci_flow 2}
\end{figure}

{\bf Acknowledgements:} 
S. Bai is supported by NSFC, no.12301434. S. Liu is supported by NSFC, no.12001536, 12371102.  Xin Lai is supported by the start-up research fund from Beijing Institute of Mathematical Sciences and Applications (BIMSA).

\bibliographystyle{plain}
\bibliography{citations}

@article{BHLL25,
  title={On the Ricci flow on Trees},
  author={ Bai, Shuliang  and  Hua, Bobo  and  Lin, Yong  and  Liu, Shuang },	 
 journal={arXiv:2509.22140},
  year={2025},
}

@article{CKLMPS25,
  title={Bakry-Émery curvature sharpness and curvature flow in finite weighted graphs: Theory},
  author={David Cushing and Supanat Kamtue and Shiping Liu and Florentin Münch and
Norbert Peyerimhoff and Ben Snodgrass },
  journal={Manuscripta Mathematica},
  volume={176},
number= {11},
  year={2025},
}

@article{RF24,
  title={The convergence and uniqueness of a discrete-time nonlinear Markov chain},
  author={ Li, Ruowei  and Münch, Florentin},
	journal={arXiv:2407.00314},
  year={2024},
}

@article{BP14,
  title={Ricci curvature on polyhedral surfaces via optimal transportation},
  author={ Loisel, Benoît  and  Romon, Pascal },
  journal={Axioms},
  volume={3},
  number={1},
  pages={119-139},
  year={2014},
}

@article{JM21,
  title={Characterizations of Forman curvature},
  author={Jost, Jürgen and Münch, Florentin},
journal={arXiv.2110.04554},
  year={2021},
}

@article{TT21,
  title={Enhanced Forman curvature and its relation to Ollivier curvature},
  author={ Tee, Philip  and  Trugenberger, C. A. },
journal={Europhysics Letters},
  volume={133},
  number={6},
  year={2024},
  DOI = {10.1007/s12220-025-02164-4},
  pages = {60006},
  year={2021},
}

@article{BLLL25,
  title={Ricci Flow on Weighted Digraphs with Balancing Factor},
  author={ Bai, Shuliang  and  Li, Rui  and  Liu, Shuang  and  Lai, Xin },
journal={arXiv, 2509.19989},
  year={2025},
}

@article{MY25,
  title={Piecewise-linear Ricci curvature flows on weighted graphs},
  author={ Ma, Jicheng  and  Yang, Yunyan },	
  journal={arXiv:2505.15395},
  year={2025},
}

@article{MY24,
  title={A modified Ricci flow on arbitrary weighted graph},
  author={ Ma, Jicheng  and  Yang, Yunyan },
  journal={The Journal of Geometric Analysis},
  volume={35},
  number={332},
  year={2024},
  DOI = {10.1007/s12220-025-02164-4},
}

@article{MYZZ24,
  title={Finding core subgraphs of directed graphs via discrete Ricci curvature flow},
  author={ Zhao, Juan and Ma, Jicheng  and  Yang, Yunyan and Zhao, Liang},
  journal={arXiv.2512.07899},
  year={2025},
}

@article{TMYZ25,
  title = {Community detection of undirected hypergraphs by Ricci flow},
  author = {Tian, Yulu and Ma, Jicheng and Yang, Yunyan and Zhao, Liang},
  journal = {Physical Review E},
  volume = {112},
  issue = {4},
  pages = {044311},
  numpages = {13},
  year = {2025},
  publisher = {American Physical Society},
  url = {https://link.aps.org/doi/10.1103/d5n8-y58m}
}

@article{LLY11,
  title={Ricci curvature of graphs},
  author={ Lin, Yong  and  Lu, Linyuan  and  Yau, Shing Tung },
  journal={Tohoku Mathematical Journal},
  volume={63},
  number={4},
  pages={605-627},
  year={2011},
}

@article{F03,
  title={Bochner's Method for Cell Complexes and Combinatorial Ricci Curvature},
  author={Forman},
  journal={Discrete $\&$ Computational Geometry},
  volume={29},
  number={3},
  pages={323-374},
  year={2003},
}

@article{H82,
author = {Richard S. Hamilton},
title = {{Three-manifolds with positive Ricci curvature}},
volume = {17},
journal = {Journal of Differential Geometry},
number = {2},
publisher = {Lehigh University},
pages = {255 -- 306},
year = {1982},
doi = {10.4310/jdg/1214436922},
URL = {https://doi.org/10.4310/jdg/1214436922}
}

@article{P02,
  title={The entropy formula for the Ricci flow and its geometric applications},
  author={ Perelman, G. },
  journal={arXiv:math/0211159.},
year={2002},
}

@article{NLLG19,
  title={Community Detection on Networks with Ricci Flow},
  author={ Ni, Chien-Chun  and  Lin, Yu-Yao  and  Luo, Feng  and  Gao, Jie},
  journal={Scientific Reports},
  year={2019},
}

@article{HLW24,
  title={A version of {B}akry-Émery Ricci flow on a finite graph
},
  author={Bobo Hua and Yong Lin and Tao Wang},
  journal={arXiv},
  year={2024},
  volume={arXiv:2402.07475},
}

@article{MW,
title = {Ollivier {Ricci} curvature for general graph {Laplacians}: {Heat} equation, {Laplacian} comparison, non-explosion and diameter bounds},
journal = {Advances in Mathematics},
volume = {356},
pages = {106759},
year = {2019},
issn = {0001-8708},
doi = {https://doi.org/10.1016/j.aim.2019.106759},
url = {https://www.sciencedirect.com/science/article/pii/S000187081930369X},
author = {Florentin Münch and Radosław K. Wojciechowski},
}

@article{Ollivier,
author = {Ollivier, Yann},
year = {2009},
month = {02},
pages = {810-864},
title = {Ricci curvature of Markov chains on metric spaces},
volume = {256},
journal = {Journal of Functional Analysis},
doi = {10.1016/j.jfa.2008.11.001}
}

@article{LLY,
author = {Lin, Yong and Lu, Linyuan and Yau, Shing-Tung},
year = {2011},
month = {12},
pages = {},
title = {Ricci curvature of graphs},
volume = {63},
journal = {Tohoku Mathematical Journal},
doi = {10.2748/tmj/1325886283}
}

@article{bailin,
author={Bai, Shuliang and Lin, Yong and Lu, Linyuan and Wang, Zhiyu and Yau, Shing-Tung},
volume = {146},
number={4},
title={Ollivier Ricci-flow on weighted graphs},
year={2024},
journal = {American Journal of Mathematics}

}

@article{F,
author={R. Forman}, 
title={Bochner’s method for cell complexes and combinatorial Ricci curvature}, 
journal ={Discrete $\&$ Computational
Geometry},
volume={29(3)},
year={2003},
pages={323–374}
}

@article{comflow,
author = {Ke Feng and Huabin Ge and Bobo Hua},
title = {{Combinatorial Ricci flows and the hyperbolization of a class of compact $3$–manifolds}},
volume = {26},
journal = {Geometry $\&$ Topology},
number = {3},
publisher = {MSP},
pages = {1349-1384},
year = {2022},
URL = {https://doi.org/10.2140/gt.2022.26.1349}
}

@article{CombinatorialCalabiFlows,
  author    = {Ge, Huabin},
  title     = {Combinatorial Calabi Flows on Surfaces},
  journal   = {Transactions of the American
Mathematical Society},
  volume    = {370},
  number    = {2},
  pages     = {1377-1391},
  year      = {2018},
  publisher = {American Mathematical Society},
}

@article{GLICKENSTEIN2005791,
title = {A combinatorial Yamabe flow in three dimensions},
journal = {Topology},
volume = {44},
number = {4},
pages = {791-808},
year = {2005},
issn = {0040-9383},
doi = {https://doi.org/10.1016/j.top.2005.02.001},
url = {https://www.sciencedirect.com/science/article/pii/S0040938305000224},
author = {David Glickenstein},
}

@article{10.1093/comnet/cnw030,
    author = {Weber, Melanie and Saucan, Emil and Jost, Jürgen},
    title = {Characterizing complex networks with Forman-Ricci curvature and associated geometric flows},
    journal = {Journal of Complex Networks},
    volume = {5},
    number = {4},
    pages = {527-550},
    year = {2017},
    month = {01},
   }

@Article{axioms5040026,
AUTHOR = {Weber, Melanie and Jost, Jürgen and Saucan, Emil},
TITLE = {Forman-Ricci Flow for Change Detection in Large Dynamic Data Sets},
JOURNAL = {Axioms},
VOLUME = {5},
YEAR = {2016},
NUMBER = {4},
ARTICLE-NUMBER = {26},
URL = {https://www.mdpi.com/2075-1680/5/4/26},
ISSN = {2075-1680}}

@article{Lai2022Normalized,
title = {Normalized discrete Ricci flow used in community detection},
journal = {Physica A: Statistical Mechanics and its Applications},
volume = {597},
pages = {127251},
year = {2022},
issn = {0378-4371},
author = {Xin Lai and Shuliang Bai and Yong Lin}
}

@Article{Lai2023Deeper,
AUTHOR = {Lai, Xin and Liu, Yang and Qian, Rui and Lin, Yong and Ye, Qiwei},
TITLE = {Deeper Exploiting Graph Structure Information by Discrete Ricci Curvature in a Graph Transformer},
JOURNAL = {Entropy},
VOLUME = {25},
YEAR = {2023},
NUMBER = {6},
ARTICLE-NUMBER = {885},
URL = {https://www.mdpi.com/1099-4300/25/6/885},
PubMedID = {37372229},
ISSN = {1099-4300},
DOI = {10.3390/e25060885}
}

@ARTICLE{Yadav2023Discrete,
AUTHOR={Yadav, Yasharth  and Elumalai, Pavithra  and Williams, Nitin  and Jost, Jürgen  and Samal, Areejit }, 
TITLE={Discrete Ricci curvatures capture age-related changes in human brain functional connectivity networks},     
JOURNAL={Frontiers in Aging Neuroscience},     
VOLUME={Volume 15 - 2023},
YEAR={2023},
URL={https://www.frontiersin.org/journals/aging-neuroscience/articles/10.3389/fnagi.2023.1120846},
DOI={10.3389/fnagi.2023.1120846},
ISSN={1663-4365}
}

@article {Farooq2017Network,
author = {Farooq, Hamza and Chen, Yongxin and Georgiou, Tryphon T. and Tannenbaum, Allen and Lenglet, Christophe},
title = {Network Curvature as a Hallmark of Brain Structural Connectivity},
elocation-id = {162875},
year = {2017},
doi = {10.1101/162875},
publisher = {Cold Spring Harbor Laboratory},
}

@article{Zheng2026RicciGraphDTA,
	title = {Ricci-{GraphDTA}: {A} graph neural network integrating discrete {Ricci} curvature for drug–target affinity prediction},
	volume = {142},
	issn = {1093-3263},
	url = {https://www.sciencedirect.com/science/article/pii/S109332632500230X},
	doi = {https://doi.org/10.1016/j.jmgm.2025.109170},
	journal = {Journal of Molecular Graphics and Modelling},
	author = {Zheng, Xiangxiang and Zhang, Zhongrong and Zhang, Xiaona and Jin, Nengzhi and Zhang, Yunyun},
	year = {2026},
	keywords = {/unread, Cold-start, DTA, Forman curvature, GIN},
	pages = {109170},
}
\end{document}